\date{\today}
\begin{document}
\title{Large time behavior for a Hamilton-Jacobi equation in a critical Coagulation-Fragmentation model}

\author{Hiroyoshi Mitake\thanks{Graduate School of Mathematical Sciences,  University of
    Tokyo, 3-8-1 Komaba, Meguro-ku, Tokyo, 153-8914, Japan} (\href{mailto:mitake@ms.u-tokyo.ac.jp}{mitake@ms.u-tokyo.ac.jp})
        \and
        Hung V. Tran\thanks{Department of Mathematics,
        University of Wisconsin Madison,
        Van Vleck Hall, 480 Lincoln Drive, Madison, WI 53706} (\href{mailto:hung@math.wisc.edu}{hung@math.wisc.edu})
        \and
        Truong-Son Van\thanks{Department of Mathematical Sciences and
        Center for Nonlinear Analysis,
        Carnegie Mellon University, Pittsburgh, PA 15213 (\href{mailto:sonv@andrew.cmu.edu}{sonv@andrew.cmu.edu})}}

\pagestyle{myheadings} \markboth{Large time behavior for a H-J eq. in a critical C-F eq.}{Mitake, Tran and Van} \maketitle

\begin{abstract}
We study the large time behavior of the sublinear viscosity solution to a singular Hamilton-Jacobi equation that appears in a  critical Coagulation-Fragmentation model with multiplicative coagulation and constant fragmentation kernels. Our results include complete characterizations of stationary solutions and optimal conditions to guarantee large time convergence. In particular, we obtain convergence results under certain natural conditions on the initial data, and a nonconvergence result when such conditions fail.
\end{abstract}

\begin{keywords}
    critical Coagulation-Fragmentation equations; singular Hamilton-Jacobi equations; Bernstein transform; large time behaviors; nonconvergence results; viscosity solutions.
\end{keywords}

\smallskip
\begin{AMS} 35B40, 
35D40, 
35F21,  
44A10, 
45J05,
49L20, 
49L25. 
\end{AMS}

\section{Introduction}
The Coagulation-Fragmentation equation (C-F) is an integrodifferential equation that describes the evolution of distribution of objects via simple mechanisms of coalescence and breakage. In its strong form, the continuous Coagulation-Fragmentation equation reads as follows
\begin{equation}\label{eq:CF}
    \begin{cases}
          \partial_t \rho(s,t) = Q_c(\rho)(s,t) + Q_f(\rho)(s,t) \quad &\text{ in } (0,\infty)\times (0,\infty)\,,\\
          \rho(s,0)=\rho_0(s) \quad &\text{ on } [0,\infty)\,.
    \end{cases}
\end{equation}
Here, $\rho(s,t) \geq 0$ is the density of clusters of particles of size $s \geq 0$ at time $t\geq 0$.
The coagulation term  $Q_c$ and the fragmentation term $Q_f$ are given by
\[
Q_c(\rho)(s,t)=\frac{1}{2} \int_0^s a(y,s-y)\rho(y,t) \rho(s-y,t)\,dy - \rho(s,t) \int_0^\infty a(s,y) \rho(y,t)\,dy\,,
\]
and
\[
Q_f(\rho)(s,t)=-\frac{1}{2} \rho(s,t) \int_0^s b(s-y,y)\,dy + \int_0^\infty b(s,y) \rho(y+s,t)\,dy\,.
\]
The coagulation kernel $a$ and the fragmentation kernel $b$ are non-negative and symmetric functions on $(0,\infty)^2$.

Although the equation has a history of over a hundred years and despite the works of many mathematicians, there are still a lot of mathematical mysteries about it. In particular, the most basic question about wellposedness has not been addressed satisfactorily and is an active research area.
For more historical contexts and surveys of the field, we refer the reader to the following works~\cite{Aldous99, Costa15, LAURENCOT2019-2, LAURENCOT2019}.

In this work, we restrict our attention to the multiplicative coagulation and constant fragmentation kernels, that is,
\begin{equation}\tag{A} \label{A:kernel}
    a(s,\tilde s) = s\tilde s \quad \text{ and } \quad b(s,\tilde s) = 1 \quad \text{ for all } s,\tilde s > 0.
\end{equation}
This is a so-called critical case (among other more complicated ones), where the existence of mass-conserving solutions depends on the initial data. 
Despite multiple efforts using different approaches, the wellposedness theory for this particular case has not been fully established. 
In particular, letting $m$ be the first moment of the initial data $\rho_0$,
using the moment bound method in~\cite{LAURENCOT2019-2}, Lauren\c{c}ot, under certain assumptions on initial moments, established existence and uniqueness of mass-conserving solutions for $m \in \left(0, \frac{1}{4 \log 2}\right)$. By studying the viscosity solution to a singular Hamilton-Jacobi equation that results from applying the Bernstein transform to equation~\eqref{eq:CF}, the second and third authors  established existence and uniqueness of mass-conserving measure valued solutions for $m\in \left(0,\frac{1}{2}\right)$.  
This approach was initiated in~\cite{TranVan2019}, inspired by the works of Menon and Pego, who pioneered the study of the Smoluchowski equation (C-F with pure coagulation) via Bernstein transform~\cite{MenonPego04, MenonPego05, MenonPego06, MenonPego07, MenonPego08}.

Non-existence of mass-conserving solutions for $m>1$ were established first in~\cite{BanasiakLambEA19} by the moment bound method and confirmed again with minimal assumptions in~\cite{TranVan2019} by studying the corresponding Hamilton-Jacobi equation. 
Furthermore, while uniqueness of mass-conserving solutions for $m \in [\frac{1}{2}, 1]$ was established in~\cite{TranVan2019},
the existence question remains an outstanding open problem.

\smallskip

Here, we will not discuss the wellposedness theory but, rather, focus on studying the dynamics of solutions. 
Specifically, we are interested in the long-time behavior of the solutions when $m=1$. For $m \in (0,1)$, it was shown in~\cite{TranVan2019} that all solutions will turn to dust (particles of size zero) as $t \to \infty$, i.e., $\lim_{t\to \infty} \rho(s,t) = m \delta_0$. 
The difficulty for the case $m=1$ lies in the fact that there are infinitely many stationary solutions. 
This was observed by Lauren\c{c}ot via private communications and recorded in~\cite{TranVan2019}. 
Therefore, full characterizations of stationary solutions are needed. It is also unclear from the Hamilton-Jacobi equation point of view that the viscosity solution converges to a stationary solution as $t\to\infty$. 
To address these questions, we need to study more deeply the viscosity solution to the aforementioned Hamilton-Jacobi equation. 

\subsection{Bernstein transform and Hamilton-Jacobi equation}
For a nonnegative measure $\mu$ on $[0,\infty)$ such that $\int_0^\infty \min\{1,s\}\, \mu(ds)<\infty$,
its Bernstein transform is defined by the following integral
\begin{equation*}
    \mathfrak{B}[\mu](x) \defeq \int_0^\infty (1- e^{-sx}) \mu(ds) \,. 
\end{equation*}

Writing the equation~\eqref{eq:CF} under assumption~\eqref{A:kernel} in its weak form, we have that for every test function $\phi \in BC([0,\infty))\cap \Lip([0,\infty))$ such that $\phi(0)=0$,
\begin{equation}\label{weak sln}
  \begin{split}
    \frac{d}{dt} \int_0^\infty \phi(s) \rho(s,t) \, ds &= \frac{1}{2} \int_0^\infty \int_0^\infty (\phi(s + \hat s) - \phi(s) - \phi(\hat s)) s \rho(s,t) \hat s \rho(\hat s,t) \, d\hat s ds \\ 
    & \quad -\frac{1}{2} \int_0^\infty \paren[\Big]{ \int_0^s (\phi(s) - \phi(\hat s) - \phi(s-\hat s)) \,d\hat s  } \rho(s,t) \, ds \,.
  \end{split}
\end{equation}

Letting $\phi^x(s) = 1 - e^{-sx}$ be a test function in the above for each $x \geq 0$, and denote 
\begin{equation*}
    F(x,t) \defeq \mathfrak{B}[\rho](x,t) \quad \text{ and } \quad  F_0(x) \defeq \mathfrak{B}[\rho_0](x) \,.
\end{equation*}
Here, $\rho_0 \geq 0$ is the given initial data.
If conservation of mass (first moment) holds, that is,
 \begin{equation*}
     m_1(t) = \int_0^\infty s \rho(s,t)\, ds=\int_0^\infty s \rho_0(s)\, ds= m
 \end{equation*}
 for all $t\geq 0$ for some given $m>0$, then we have the following equation (see \autoref{appendix} for a derivation)
\begin{equation}\label{HJ}
\begin{cases}
    \partial_t F + \frac{1}{2}(\partial_x F - m)(\partial_x F - m -1) + \frac{F}{x} - 1 = 0 \qquad &\text{in $(0,\infty)^2$},\\
    0 \leq F(x,t) \leq mx \qquad &\text{on $[0,\infty)^2$},\\
    F(x,0)=F_0(x) \qquad &\text{on $[0,\infty)$}.
\end{cases}
\end{equation}

 We focus on the case that $m=1$ in this paper.
 Thus, the main equation of interests is
 \begin{equation}\label{HJ-1}
\begin{cases}
    \partial_t F + \frac{1}{2}(\partial_x F - 1)(\partial_x F - 2) + \frac{F}{x} - 1 = 0 \qquad &\text{in $(0,\infty)^2$},\\
    0 \leq F(x,t) \leq x \qquad &\text{on $[0,\infty)^2$},\\
    F(x,0)=F_0(x) \qquad &\text{on $[0,\infty)$}.
\end{cases}
\end{equation}
Appropriate conditions on initial data $F_0$ will be specified in the next subsection.
Large time behavior of \eqref{HJ-1} has not been studied in the literature, and this was left as an open problem in \cite{TranVan2019}.
We are always concerned here with viscosity solutions of first-order Hamilton-Jacobi equations, and the adjective ``viscosity'' is omitted henceforth.

\subsection{Main results}
In this subsection, we give an outline of our findings.
For each fixed $x\geq 0$, $F(x,t)$ is bounded for all $t\geq 0$ as $0 \leq F(x,t)\leq x$. Therefore, for stationary solutions, it is reasonable to impose that $\partial_t F=0$, and hence, \eqref{HJ-1} becomes
\begin{equation}\label{HJ-s}
\begin{cases}
    \frac{1}{2}(\partial_x F - 1)(\partial_x F - 2) + \frac{F}{x} - 1 = 0 &\qquad \text{in $(0,\infty)$},\\
    0 \leq F(x) \leq x &\qquad \text{on $[0,\infty)$}.
\end{cases}
\end{equation}
Our first goal is to characterize all continuous sublinear viscosity solutions to \eqref{HJ-s}.

\smallskip

\begin{theorem}\label{thm:characterization3}
Let $F \in C([0,\infty))$ be a sublinear viscosity solution to \eqref{HJ-s}.
Then, either $F \equiv 0$ or there exists $c>0$ such that
\[
F(x) = \frac{1}{c} \bar F(cx) \qquad \text{ for all } x\geq 0.
\]
Here, $\bar F: [0,\infty) \to [0,\infty)$ is such that $\bar F(0)=0$, and
\[
\partial_x \bar F(x) = \frac{1}{\sqrt{x}} \brak[\Bigg]{ \paren[\Big]{ \frac{\sqrt{x+x_0}+\sqrt{x}}{2} }^{1/3} 
- \paren[\Big]{ \frac{\sqrt{x+x_0}-\sqrt{x}}{2}}^{1/3}  }
\]
for $x_0=\frac{4}{27}$.
\end{theorem}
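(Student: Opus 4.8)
The plan is to reduce \eqref{HJ-s} to an explicitly solvable first‑order ODE for $F'$, integrate it by Cardano's formula, and then use the one‑sided structure of viscosity solutions of convex Hamilton--Jacobi equations together with the constraint $0\le F\le x$ to discard every solution other than $F\equiv 0$, $F\equiv x$, and the stated one‑parameter family; a final appeal to sublinearity removes $F\equiv x$. Concretely, the Hamiltonian $H(x,F,p)=\tfrac12(p-1)(p-2)+\tfrac Fx-1$ is coercive and convex in $p$, so on $(0,\infty)$ a continuous solution with $0\le F\le x$ is locally Lipschitz and locally semiconcave; hence $F'$ exists off a countable set, has one‑sided limits everywhere with $F'(x^-)\ge F'(x^+)$, and at each $x>0$ the equation forces every such value to solve $p^2-3p+2F(x)/x=0$, i.e.\ $F'(x^\pm)\in\{p_-(x),p_+(x)\}$ with $p_\pm(x)=\tfrac12\bigl(3\pm\sqrt{9-8F(x)/x}\,\bigr)$. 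The bound $0\le F\le x$ then gives $p_-(x)\in[0,1]$ (the ``lower branch'') and $p_+(x)\in[2,3]$ (the ``upper branch'').

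On any subinterval where $F'=p_-$ one has $F=\tfrac x2F'(3-F')$; differentiating gives the separable equation $x(3-2F')F''=F'(F'-1)$, i.e.\ $\frac{d}{dx}\bigl(\frac{1-F'}{x(F')^3}\bigr)=0$, so $\frac{1-F'}{(F')^3}=cx$ for a constant $c\ge0$, equivalently $cx(F')^3+F'-1=0$. With $c=1$ this is the depressed cubic $p^3+\tfrac1x p-\tfrac1x=0$, whose unique real root (it is strictly increasing in $p$ and changes sign on $(0,1)$) is given by Cardano's formula; simplifying with $27x+4=27(x+x_0)$, $x_0=\tfrac4{27}$, yields exactly $\partial_x\bar F(x)$ as in the statement. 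For general $c>0$ the invariance of \eqref{HJ-s} under the scaling $F\mapsto\tfrac1cF(c\,\cdot)$ together with $F(0)=0$ gives $F=\tfrac1c\bar F(c\,\cdot)$; the degenerate values $c=0$ and $c=\infty$ correspond to $F(x)=x$ and $F\equiv0$ respectively (both solve \eqref{HJ-s}, since $H(x,x,1)=0=H(x,0,0)$).

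It then remains to see that a sublinear solution is globally one of these three. On an \emph{upper}‑branch arc the same computation gives $\frac{F'-1}{(F')^3}=cx$, and since $p\mapsto\frac{p-1}{p^3}$ maps $[2,3]$ decreasingly onto the bounded interval $[\tfrac2{27},\tfrac18]$, such an arc is confined to a bounded $x$‑interval whose left endpoint $a$ has $F(a)=0$ (where $p_+=3$); comparing with the behavior of $F$ just to the left of $a$ — whether another branch or an obstacle — always forces $F'(a^-)<F'(a^+)$, an upward jump ruled out by semiconcavity, so the upper branch never occurs. Using the one‑sided limits of $F'$ one likewise checks that $\{F=0\}\cap(0,\infty)$ and $\{F=x\}\cap(0,\infty)$ are relatively open: at a boundary point $F$ would have to leave the obstacle either with an upward jump of $F'$ (now impossible) or along the lower branch with $\frac{1-F'}{(F')^3}$ forced to equal $0$ or $\infty$ at a finite $x>0$ (impossible); hence each set is empty or all of $(0,\infty)$. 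If the first is all of $(0,\infty)$ then $F\equiv0$; if the second is, then $F(x)=x$, which is not sublinear and is excluded; otherwise $0<F<x$ with $F'=p_-\in(0,1)$ on all of $(0,\infty)$, no kink occurs (the only admissible one would be an upper$\to$lower transition, and there is no upper branch), so $F'\in C^\infty$ and $c=\frac{1-F'(x)}{x\,F'(x)^3}$ is a continuous first integral, hence a single constant $c>0$, giving $F=\tfrac1c\bar F(c\,\cdot)$.

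The main difficulty is this last structural step: deducing, from the mere viscosity property plus the two‑sided bound $0\le F\le x$, that $F$ is actually a smooth solution of the separable ODE on all of $(0,\infty)$ and never lands on the upper algebraic branch nor oscillates onto an obstacle. This rests on the local semiconcavity of viscosity solutions of convex Hamilton--Jacobi equations (to forbid upward jumps of $F'$) combined with the quantitative constraints that $0\le F\le x$ imposes on the two roots $p_\pm$, with some care needed near the singular endpoint $x=0$. Once this dichotomy is secured, the explicit integration and the Cardano computation are entirely routine.
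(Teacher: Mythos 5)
Your computational skeleton coincides with the paper's: reduce the equation at differentiability points to the quadratic $(\partial_x F)^2-3\,\partial_xF+2F/x=0$, select the lower root $p_-\in[0,1]$, integrate the resulting separable ODE to the first integral $\frac{1-\partial_xF}{(\partial_xF)^3}=cx$, invert by Cardano, and use the scaling invariance together with sublinearity to discard $F(x)=x$. The gap is precisely in the step you yourself single out as the main difficulty, and it is not filled. You invoke ``local semiconcavity of viscosity solutions of convex Hamilton--Jacobi equations'' to obtain one-sided derivatives everywhere with only downward jumps, but no such theorem is available off the shelf for a merely continuous viscosity solution of a \emph{stationary} equation on an open interval (semiconcavity results of Cannarsa--Sinestrari or weak KAM type apply to value functions, Cauchy problems, or calibrated solutions, none of which is assumed here, and the singular term $F/x$ puts the equation outside the standard settings); a general Lipschitz function need not even possess one-sided derivatives at every point. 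Consequently, the decomposition of $\{\partial_xF=p_+\}$ into ``arcs'' on which $\frac{\partial_xF-1}{(\partial_xF)^3}=cx$ can be integrated, and the identification of their left endpoints, presuppose exactly the $C^1$ regularity you are trying to establish.

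What closes this gap in the paper is a single variational argument that bypasses semiconcavity entirely: if $\partial_xF(z)=p_+(z)\ge2$ at some differentiability point $z$, one finds $y<z$ with $\partial_xF(y)\le1$ (possible since $F\le x$), so $\phi(x)=F(x)-\tfrac32x$ attains an interior minimum at some $\bar x\in(y,z)$; the viscosity supersolution test with the vertex slope $\tfrac32$ yields $F(\bar x)/\bar x\ge\tfrac98$, contradicting $F\le x$. Hence $\partial_xF(x)=\tfrac12\bigl(3-\sqrt{9-8F(x)/x}\bigr)$ at every differentiability point, i.e.\ a.e.; since the right-hand side is continuous, the fundamental theorem of calculus upgrades $F$ to $C^1$ and then, by bootstrap, to $C^\infty$, with no need for one-sided limits of $\partial_xF$ or any arcwise analysis. (Your observation that the upper-branch first integral confines a putative upper-branch arc to a compact subinterval of $(0,\infty)$ is correct but becomes unnecessary.) The endgame --- your openness argument for the obstacle sets versus the paper's argument that a transition point $\bar z>0$ is impossible because $\partial_xF(\bar z)=\lim_{x\to\bar z^+}\partial_x\bar F(x)\in(0,1)$ --- is a matter of taste, and either works once the $C^1$ regularity and the lower-branch selection are in hand.
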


\smallskip

\begin{remark}
    Note that we do not require any differentiability of $F$ a~priori in the above theorem.
\end{remark}

Next, we study the large time behavior of the viscosity solution to \eqref{HJ-1}.
Large time behavior for Hamilton-Jacobi equations is a rich and very active subject.
We refer the readers to \cite{Fathi1998,BarlesSouganidis2000a, DaviniSiconolfi2006, CagnettiGomesMitakeTran2015} in the periodic setting,
and \cite{BarlesSouganidis2000, BarlesRoquejoffre2006, IchiharaIshii2008, Ishii2009,GigaMitakeTran2019} in noncompact settings for some representative results.
It is worth emphasizing that \eqref{HJ-1} is in a noncompact setting, and is not of the type that was studied earlier in the literature because of the singular term $\frac{F}{x}$.

For initial data $F_0$, we assume first that
\begin{equation}\label{con:A1}
\begin{cases}
    0 \leq F_0(x) \leq x, \ F_0 \text{ is sublinear},\\
    0 \leq \partial_x F_0 \leq 1.
\end{cases}
\end{equation}
The above condition \eqref{con:A1} holds true when $F_0$ is the Bernstein transform of $\rho_0=\rho(\cdot,0)$, whose first moment is $1$.
Indeed,
\[
0\leq F_0(x)=\int_0^\infty (1- e^{-xs})\rho(s,0)\,ds \leq
\int_0^\infty x s \rho(s,0)\,ds = x\, ,
\]
and, by the dominated convergence theorem,
\[
\lim_{x\to \infty} \frac{F_0(x)}{x}=\lim_{x\to \infty}\int_0^\infty \frac{1- e^{-xs}}{x}\rho(s,0)\,ds =0\,.
\]
Besides,
\[
0\leq \partial_x F_0(x)=\int_0^\infty s e^{-xs}\rho(s,0)\,ds \leq 1\, ,
\]
and $\partial_x F_0(0)=1$.
An important point is that we do not need to require conditions on the higher derivatives of $F_0$ here in order to study large time behavior of \eqref{HJ-1} although if $F_0(x) = \mathfrak{B}[\rho_0](x)$, then $F_0$ is smooth.

\medskip

There are three regimes of the initial data $F_0$ to be considered: subcritical, critical, and supercritical. 
We say that the initial data $F_0$ of equation~\eqref{HJ-1} is 

\smallskip

\begin{enumerate}
    \item subcritical if 
        \begin{equation}\label{con:A2}
            \lim_{x \to \infty}\frac{F_0(x)}{x^{2/3}}=0 \,;
        \end{equation}
    \item critical if there exists $\delta >0$ such that
        \begin{equation}\label{con:A4}
            \lim_{x\to \infty}\frac{F_0(x)}{x^{2/3}} =\delta  \,;
        \end{equation}
     \item supercritical if 
        \begin{equation}\label{con:A3}
            \lim_{x\to \infty}\frac{F_0(x)}{x^{2/3}} =  \infty \,.
        \end{equation}
\end{enumerate}
This characterization comes from the observation that the stationary solution $\bar F$ behaves like $O(x^{2/3})$ as $x \to \infty$.
Here are our large time behavior results corresponding to the three different regimes.

\smallskip

\begin{theorem}\label{thm:large time2}
Assume \eqref{con:A1} and \eqref{con:A2}.
Let $F$ be the unique viscosity solution to \eqref{HJ-1}.
Then, as $t\to \infty$, 
\[
F(x,t) \to 0 \qquad \text{ locally uniformly for $x\in[0,\infty)$.} 
\]
\end{theorem}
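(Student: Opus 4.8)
The plan is to construct suitable supersolutions that decay to $0$ as $t\to\infty$ and then squeeze $F$ between $0$ and these supersolutions via the comparison principle for \eqref{HJ-1}. Since $F\ge 0$ trivially (the constant $0$ is a subsolution and is consistent with the constraint $0\le F\le x$, while $F_0\ge 0$), the whole content is the upper bound. The key structural feature to exploit is that the only nonzero stationary solutions are the rescaled profiles $\frac1c\bar F(cx)$ from \autoref{thm:characterization3}, and that these behave like $O(x^{2/3})$ at infinity with the constant in front controlled by $c$: concretely, if $\bar F(x)\sim \kappa x^{2/3}$ as $x\to\infty$ for an explicit $\kappa>0$ (computable from the ODE for $\partial_x\bar F$, since $\partial_x\bar F(x)\sim \tfrac{2\kappa}{3}x^{-1/3}$), then $\frac1c\bar F(cx)\sim \kappa c^{-1/3}x^{2/3}$, which we can make as small as we like at any fixed $x$ by sending $c\to\infty$. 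So the first step is: for each small $\varepsilon>0$ choose $c=c(\varepsilon)$ large so that the stationary solution $G_c(x):=\frac1c\bar F(cx)$ satisfies $G_c(x)\le \varepsilon$ on a prescribed compact set, and note $G_c$ is a (time-independent) supersolution of \eqref{HJ-1}.

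The second step is the comparison/initialization issue: we do not have $F_0\le G_c$ on all of $[0,\infty)$ — near $x=0$ both vanish with the wrong slopes ($\partial_x F_0(0)=1$ while $\partial_x G_c(0)$ is the value of $\partial_x\bar F(0)$, which one checks from the formula is $<1$), and we only know $F_0(x)=o(x^{2/3})$, so $F_0$ could exceed $G_c$ somewhere. The remedy is to add to $G_c$ a term that dominates $F_0$ initially but is killed by the evolution. A natural choice is to use the finite-speed/ordering structure: since $0\le\partial_x F_0\le 1$, the function $x\mapsto x$ itself (the maximal stationary-type barrier, $F\equiv x$ solves the constraint and is a supersolution on $(0,\infty)$ because plugging in gives $0+0+1-1=0$) dominates $F_0$ everywhere, and we interpolate. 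More precisely I would build a one-parameter family $W_{\lambda}(x,t)$ of supersolutions that at $t=0$ satisfy $W_\lambda(\cdot,0)\ge F_0$ and as $t\to\infty$ decrease to $G_c$; for instance take $W$ to be the solution of \eqref{HJ-1} with initial data $\min\{x,\,\text{something}\ge F_0\}$ chosen above $F_0$, and prove $W\downarrow$ a stationary solution. Then $F\le W$ for all time by comparison, and $\limsup_{t\to\infty}F(x,t)\le \lim_t W(x,t)=$ a stationary solution that we have arranged to be $\le\varepsilon$ on the compact set; letting $\varepsilon\to 0$ gives $F(x,t)\to 0$ locally uniformly.

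Alternatively — and this is probably cleaner — one can run the half-relaxed limits argument directly: set $\overline F(x)=\limsup^*_{t\to\infty}F(x,t)$ and $\underline F(x)=\liminf_*{}_{t\to\infty}F(x,t)$. Standard stability of viscosity solutions shows $\overline F$ is a subsolution and $\underline F$ a supersolution of the stationary equation \eqref{HJ-s}, with $0\le\underline F\le\overline F\le x$ and, crucially, $\overline F(x)\le\limsup^*$ of the supersolution barriers $=O(x^{2/3})$ with a constant we can drive to $0$ — so $\overline F$ is a sublinear subsolution of \eqref{HJ-s} with $\overline F(x)=o(x^{2/3})$. The final step is then a Liouville-type uniqueness statement: the only nonnegative sublinear subsolution of \eqref{HJ-s} that is $o(x^{2/3})$ is $\overline F\equiv 0$ (any nonzero sub/super-solution is trapped between rescaled $\bar F$'s and hence is exactly $\Theta(x^{2/3})$). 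Combined with $0\le\underline F\le\overline F$ this forces $\underline F=\overline F=0$, i.e. locally uniform convergence to $0$.

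The main obstacle I expect is \textbf{controlling the behavior at the singular point $x=0$}, where the term $F/x$ degenerates and the comparison principle is delicate: one must ensure the barriers are genuine supersolutions up to and including $x=0$ (or handle $x=0$ by a separate boundary-layer argument, using $0\le F\le x$ to pin down $F(0,t)=0$ and the one-sided slope bound), and one must verify that the comparison principle for \eqref{HJ-1} quoted from the earlier well-posedness theory applies to the non-smooth barriers being used. A secondary technical point is the precise asymptotics $\bar F(x)=\kappa x^{2/3}+o(x^{2/3})$ and the monotone convergence of the time-dependent barriers to a stationary profile; these should follow from the explicit ODE for $\partial_x\bar F$ and a monotonicity-in-$t$ argument (comparison of $F(\cdot,t)$ with $F(\cdot,t+h)$), respectively, but need to be done carefully because of the constraint $0\le F\le x$.
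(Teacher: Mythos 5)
Your overall strategy---squeeze $F$ from above by the rescaled stationary profiles $G_c(x)=\tfrac1c\bar F(cx)$ and let $c\to\infty$ so that $G_c(x)\sim\tfrac32 c^{-1/3}x^{2/3}\to0$ pointwise---is sound and genuinely different from the paper's argument, but as written the initialization step has a real gap. Your proposed remedy is to dominate $F_0$ by something like $x$ (or ``$\min\{x,\ \text{something}\ge F_0\}$'') and then ``prove $W\downarrow$ a stationary solution.'' This cannot work as stated: by Theorem \ref{thm:large time3}, the solution emanating from initial data that grows faster than $x^{2/3}$ (in particular from $x$ itself) converges to $x$, not to $0$, so the monotone limit of your barrier would in general be the wrong stationary solution and the comparison yields nothing. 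Moreover, monotone convergence of a solution of \eqref{HJ-1} to \emph{some} stationary solution is itself unproven and is essentially the theorem you are trying to establish. The correct repair is the one the paper uses for the super/critical cases: since $F_0(x)=o(x^{2/3})$, for each $c$ there is $r_c$ with $F_0\le G_c$ on $[r_c,\infty)$; take initial data $\max\{F_0,G_c\}$, which coincides with the stationary solution $G_c$ outside a compact set, and use the finite speed of the backward characteristics ($-\tfrac32\le\dot X\le-\tfrac12$, a consequence of $0\le\partial_xF\le1$) to conclude that the corresponding solution \emph{equals} $G_c(y)$ for $t>2|r_c-y|$; then $\limsup_{t\to\infty}F(y,t)\le G_c(y)$ and $c\to\infty$ finishes. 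You gesture at the relevant facts but do not assemble them, and without this step the proof does not close. Your alternative half-relaxed-limits route also rests on an unproven Liouville statement for \emph{subsolutions} of \eqref{HJ-s} that are $o(x^{2/3})$; Theorem \ref{thm:characterization3} classifies solutions, not subsolutions, so ``any nonzero sub/super-solution is trapped between rescaled $\bar F$'s'' would need a separate argument.

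Two further remarks. First, a factual slip: $\partial_x\bar F(0)=1$ (from the formula, $\alpha(0)=\beta(0)=3^{-1/2}$), hence $\partial_xG_c(0)=1$ as well, so there is no slope mismatch at the origin; the ordering difficulty is only at intermediate $x$, exactly where the $\max$ construction above is needed. Second, the paper's actual proof is far more direct and avoids barriers entirely: using the control representation \eqref{func:value} with the single admissible curve $\gamma(s)=x+\tfrac32 s$ (so $\tau_x=\infty$), one gets
\[
0\le F(x,t)\le \Bigl(\tfrac{3t}{2x}+1\Bigr)^{-2/3}F_0\bigl(x+\tfrac{3t}{2}\bigr)
=\frac{F_0\bigl(x+\tfrac{3t}{2}\bigr)}{\bigl(x+\tfrac{3t}{2}\bigr)^{2/3}}\,x^{2/3},
\]
and \eqref{con:A2} gives the conclusion immediately, with the singular factor $e^{-\int_0^t d\lambda/\gamma(\lambda)}$ supplying exactly the $x^{2/3}$ normalization. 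Your approach, once repaired, buys a uniform picture across all three regimes (sub-, super-, and critical) via the same barrier mechanism, at the cost of needing the characteristic-speed bound and the stationary classification; the paper's proof for the subcritical case needs neither.
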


\begin{theorem}\label{thm:large time4}
Assume \eqref{con:A1} and  \eqref{con:A4} for some given $\delta>0$.
Let $F$ be the unique viscosity solution to \eqref{HJ-1}, and $c=\frac{27}{8\delta^3}$.
Then, as $t\to \infty$, 
\[
F(x,t) \to \frac{1}{c} \bar F(cx) \qquad \text{ locally uniformly for $x\in [0,\infty)$,} 
\]
where $\bar F$ is given in Theorem \ref{thm:characterization3}.
\end{theorem}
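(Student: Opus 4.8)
The plan is to prove convergence to the stationary solution $G(x) := \frac{1}{c}\bar F(cx)$ via the standard half-relaxed limit (or barrier) technique, combined with a careful analysis of the behavior at $x = +\infty$, which is where the singular term $F/x$ and the degeneracy of the problem interact. First I would record the scaling structure: if $F$ solves \eqref{HJ-1}, then so does any $G(x) = \frac{1}{c}\bar F(cx)$ solve the stationary equation \eqref{HJ-s} by Theorem \ref{thm:characterization3}, and the asymptotics $\bar F(x) \sim \delta_0 x^{2/3}$ as $x\to\infty$ (with $\delta_0$ computed from the explicit formula, $\delta_0 = \left(\frac{2}{3}\right)^{2/3}$ after working out the leading term of $\partial_x\bar F(x)\sim \frac{1}{3}x^{-1/3}(\cdots)$) forces $G(x)\sim \delta x^{2/3}$ precisely when $c = \frac{27}{8\delta^3}$. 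This is the reason for the stated choice of $c$, and it shows $G$ matches the critical growth rate $\delta$ in \eqref{con:A4}.

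The core of the argument is a \emph{sandwiching by stationary solutions}. For $\varepsilon>0$ small, let $c_\varepsilon^\pm$ be chosen so that the stationary solutions $G_\varepsilon^\pm(x) = \frac{1}{c_\varepsilon^\pm}\bar F(c_\varepsilon^\pm x)$ satisfy $G_\varepsilon^-(x)\le \delta' x^{2/3}$ for all $\delta' < \delta$ and $G_\varepsilon^+(x)\ge \delta'' x^{2/3}$ for all $\delta'' > \delta$ near infinity; since these are genuine (time-independent) solutions, they are in particular sub- and supersolutions of \eqref{HJ-1}. The first task is then to show that for any $\eta>0$ one can choose $\varepsilon$ and a time $T_\eta$ such that, for $t\ge T_\eta$, we have $G_\varepsilon^- - \eta \le F(\cdot,t) \le G_\varepsilon^+ + \eta$. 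For the upper bound, one would use \eqref{con:A4} together with \eqref{con:A1} to build an initial-time supersolution dominating $F_0$: on a large ball $\{x\le R\}$ use that $F_0$ is Lipschitz with $0\le \partial_x F_0\le 1$ and $F_0\le x$, while for $x\ge R$ use $F_0(x)\le \delta'' x^{2/3}\le G_\varepsilon^+(x)$; patch these with a sup-convolution/inf-convolution or by adding a constant, invoke the comparison principle (which is available from \cite{TranVan2019} or provable as in the earlier sections), and let the constant relax to zero as $t\to\infty$ by an argument that the excess constant is dissipated — here the term $+F/x$ in the equation plays the role of a damping term. The lower bound is symmetric, using \eqref{con:A4} to say $F_0(x)\ge \delta' x^{2/3}$ eventually and controlling the region near $x=0$ where $F_0$ may vanish to first order only.

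After the sandwiching, I would take half-relaxed limits $\overline F(x) = \limsup_{t\to\infty,\,y\to x} F(y,t)$ and $\underline F(x) = \liminf_{t\to\infty,\,y\to x} F(y,t)$; these are respectively a sub- and a supersolution of the stationary equation \eqref{HJ-s}, and by the sandwich they satisfy $\delta' x^{2/3}\lesssim \underline F\le \overline F\lesssim \delta'' x^{2/3}$ with $\delta',\delta''\to\delta$. Now invoke Theorem \ref{thm:characterization3}'s classification: any continuous sublinear solution squeezed to growth rate exactly $\delta x^{2/3}$ must equal $G$. The main obstacle, and where I expect to spend the most effort, is upgrading the one-sided half-relaxed limits (a priori only semicontinuous sub/supersolutions) to the conclusion $\overline F = \underline F = G$: the classification theorem is stated for $C([0,\infty))$ solutions, so I would need either a comparison principle for \eqref{HJ-s} in the sublinear class with prescribed growth rate $\delta$ (to directly conclude $\overline F\le G\le \underline F$), or a separate uniqueness statement pinning down the stationary solution by its growth rate at infinity. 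Establishing such a comparison principle is delicate because of the singular coefficient $F/x$ near $x=0$ and the loss of coercivity of the Hamiltonian (the quadratic $\frac12(p-1)(p-2)$ vanishes on an interval of slopes), so the test-function doubling argument must be tuned carefully near both endpoints $x=0$ and $x=\infty$; this is the technical heart of the proof and I would expect to handle $x=0$ by exploiting $F(0,t)=0$ and the Lipschitz bound, and $x=\infty$ by using $G$ itself as a barrier to confine the maximum of the doubled function to a compact set.
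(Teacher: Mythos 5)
Your overall strategy---squeeze $F(\cdot,t)$ between the scaled stationary solutions $\frac{1}{d}\bar F(d\,\cdot)$ with $d\to c^{\pm}$---is exactly the paper's central idea, but the mechanism you propose for establishing the squeeze has a genuine gap, and your final step introduces a second one. The paper's proof rests on finite speed of propagation: under \eqref{con:A1} one has the a priori bound $0\le \partial_x F\le 1$, so every backward characteristic satisfies $-\tfrac32\le \dot X\le-\tfrac12$ and the domain of dependence of $(y,t)$ is contained in $[y+\tfrac t2,\,y+\tfrac32 t]$, which escapes to $+\infty$. Hence for $d>c$ one replaces $F_0$ by $\min\{F_0,\frac1d\bar F(d\,\cdot)\}$, which agrees with the stationary solution $\frac1d\bar F(d\,\cdot)$ outside a compact set; by comparison and finite propagation speed the modified solution equals $\frac1d\bar F(dy)$ at $(y,t)$ for $t$ large, giving $\liminf_{t\to\infty}F(y,t)\ge \frac1d\bar F(dy)$, and letting $d\to c^+$ (and symmetrically $d\to c^-$) finishes the proof with no asymptotic machinery. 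Your substitute for this---patching a supersolution and arguing that the excess constant is ``dissipated'' by the damping term $F/x$---does not work as stated: the coefficient $1/x$ degenerates precisely in the region $x\ge R$ where you rely on the growth comparison $F_0\le \delta'' x^{2/3}$, so a time-dependent correction $K(t)$ with $K'(t)+K(t)/x\ge 0$ for all $x$ forces $K'\ge0$ and the constant never decays globally. Without the characteristic-speed bound (which is where hypothesis \eqref{con:A1} actually enters), the sandwich for large $t$ is not established.

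The second gap is the one you yourself flag: after taking half-relaxed limits you obtain only an upper semicontinuous subsolution and a lower semicontinuous supersolution of \eqref{HJ-s}, whereas Theorem \ref{thm:characterization3} classifies \emph{continuous} solutions. To conclude $\overline F=\underline F=\frac1c\bar F(c\,\cdot)$ you would need a comparison (or growth-rigidity) principle for semicontinuous sub/supersolutions of the stationary equation, which is nontrivial because of the singular term $F/x$ at the origin and the non-coercivity of $p\mapsto \tfrac12(p-1)(p-2)$, and which is neither proved in the paper nor sketched in your proposal. The paper's route makes this entire layer unnecessary: once the squeeze $\frac1d\bar F(dy)\le\liminf F(y,t)\le\limsup F(y,t)\le \frac1{d'}\bar F(d'y)$ holds for all $d>c>d'$, continuity of $d\mapsto\frac1d\bar F(dy)$ gives the result directly. (A minor point: the correct asymptotic constant is $\bar F(x)\sim\frac32 x^{2/3}$, not $(\tfrac23)^{2/3}x^{2/3}$; your value of $c$ is nonetheless consistent with the paper's computation.)
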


\smallskip

\begin{theorem}\label{thm:large time3}
Assume~\eqref{con:A1} and~\eqref{con:A3}.
Let $F$ be the unique viscosity solution to \eqref{HJ-1}.
Then, as $t\to \infty$, 
\[
F(x,t) \to x \qquad \text{ locally uniformly for $x\in [0,\infty)$.} 
\]
\end{theorem}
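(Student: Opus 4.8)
The plan is to sandwich $F$ between the stationary solution $F\equiv x$ from above and a one-parameter family of rescaled stationary profiles $\tfrac1c\bar F(c\,\cdot)$ from below, and then let $c\to0^{+}$ so that the lower barriers sweep up to $x$; the lower barriers will themselves be produced by feeding suitable \emph{critical} initial data into Theorem~\ref{thm:large time4}. The upper bound is free: every viscosity solution of \eqref{HJ-1} satisfies $0\le F(x,t)\le x$, so $\limsup_{t\to\infty}F(x,t)\le x$ for each $x\ge0$, and only a matching lower bound needs proof.

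For the lower bound, fix $\delta>0$, set $c=\tfrac{27}{8\delta^{3}}$, and let $\bar F$ be the profile of Theorem~\ref{thm:characterization3}. The rescaled function $x\mapsto\tfrac1c\bar F(cx)$ solves \eqref{HJ-s}, obeys $0\le\tfrac1c\bar F(cx)\le x$ (hence is sublinear, being $O(x^{2/3})$ at infinity), and has derivative $\bar F'(cx)\in[0,1]$; the inclusion $\bar F'\in[0,1]$ can be read off from the formula in Theorem~\ref{thm:characterization3}, or directly from the stationary relation $\tfrac12(\bar F'-1)(\bar F'-2)=1-\tfrac{\bar F}{x}\in[0,1]$ by selecting the sublinear branch. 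I would then put
\[
G_{0}(x):=\min\Bigl\{\,F_{0}(x),\ \tfrac1c\bar F(cx)\,\Bigr\},\qquad x\ge0 .
\]
Then $0\le G_{0}\le x$, $G_{0}$ is sublinear, and $0\le\partial_{x}G_{0}\le1$ (a minimum of two functions with gradients in $[0,1]$), so $G_{0}$ satisfies \eqref{con:A1}. Moreover, since $F_{0}$ is supercritical \eqref{con:A3} while $\tfrac1c\bar F(cx)\sim\delta x^{2/3}$ as $x\to\infty$ (the asymptotic constant being pinned down by applying Theorem~\ref{thm:large time4} to the stationary datum $\tfrac1c\bar F(c\,\cdot)$), we get $F_{0}(x)\ge\tfrac1c\bar F(cx)$ for all large $x$, hence $G_{0}(x)=\tfrac1c\bar F(cx)$ for all large $x$; so $G_{0}$ also satisfies the critical condition \eqref{con:A4} with the same $\delta$.

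Let $G$ be the unique viscosity solution of \eqref{HJ-1} with initial datum $G_{0}$. Since $G_{0}\le F_{0}$, the comparison principle for \eqref{HJ-1} gives $G\le F$ on $[0,\infty)^{2}$, while Theorem~\ref{thm:large time4} gives $G(x,t)\to\tfrac1c\bar F(cx)$ locally uniformly as $t\to\infty$; hence, for each $M>0$, $F(x,t)\ge G(x,t)\ge\tfrac1c\bar F(cx)-\eta_{M}(t)$ on $[0,M]$ for some $\eta_{M}(t)\to0$. Now I would let $\delta\to\infty$, i.e.\ $c\to0^{+}$: since $\bar F(0)=0$ and $\bar F'(0)=1$ (an elementary computation from the formula of Theorem~\ref{thm:characterization3}), $\tfrac1c\bar F(cx)=x\cdot\tfrac{\bar F(cx)}{cx}\to x$ uniformly on compact $x$-intervals. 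Combining with the upper bound: given $M$ and $\varepsilon>0$, first choose $\delta$ with $\tfrac1c\bar F(cx)\ge x-\varepsilon$ on $[0,M]$, then $T$ with $\eta_{M}(t)\le\varepsilon$ for $t\ge T$; this gives $x-2\varepsilon\le F(x,t)\le x$ on $[0,M]$ for $t\ge T$, i.e.\ $F(\cdot,t)\to x$ locally uniformly, as claimed.

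The hard part is not any single estimate but the bookkeeping: verifying that the truncated datum $G_{0}$ genuinely satisfies both \eqref{con:A1} and \eqref{con:A4} (which is exactly the step where supercriticality \eqref{con:A3} of $F_{0}$ is used), and then organizing the two successive limits -- $t\to\infty$ through Theorem~\ref{thm:large time4}, followed by $\delta\to\infty$ -- so that all the convergence remains uniform on compact sets. Everything else reduces to the comparison principle together with the already-established critical-regime convergence.
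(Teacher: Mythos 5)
Your proposal is correct and follows essentially the same route as the paper: both bound $F$ from below by the solution started from $\min\{F_0,\tfrac1c\bar F(c\,\cdot)\}$, deduce $\liminf_{t\to\infty}F(y,t)\ge\tfrac1c\bar F(cy)$, and then let $c\to0^+$ using $\partial_x\bar F(0)=1$ to push the barriers up to $x$. The only (harmless) difference is that you obtain the convergence of the truncated solution by invoking Theorem~\ref{thm:large time4}, whereas the paper argues directly that the truncated datum coincides with the stationary solution outside a compact set and uses the finite speed of propagation $-\tfrac32\le\dot X\le-\tfrac12$ of backward characteristics, so that the truncated solution actually \emph{equals} $\tfrac1c\bar F(cy)$ for all large $t$.
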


We now show that the requirements on the initial condition to get large time behavior results in Theorems \ref{thm:large time2}--\ref{thm:large time3} are essential.
In other words, if \eqref{con:A2}--\eqref{con:A3} do not hold, that is,
\begin{equation}\label{lim-not-exist}
0<\liminf_{x\to \infty} \frac{F_0(x)}{x^{2/3}} <\limsup_{x\to \infty} \frac{F_0(x)}{x^{2/3}} <\infty,
\end{equation}
then large time behavior might fail.

\smallskip

\begin{theorem}\label{thm:nonconvergence}
    Let $F$ be the unique viscosity solution to~\eqref{HJ-1}. There exists $F_0 \in \Lip([0,\infty))$ that satisfies~\eqref{con:A1} in the a.e. sense and \eqref{lim-not-exist} such that for some $x_0 > 0$, $\lim_{t\to \infty} F(x_0,t)$ does not exist.
\end{theorem}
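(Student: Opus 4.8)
\textbf{Proof proposal for Theorem \ref{thm:nonconvergence}.}

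The plan is to construct $F_0$ by interpolating between two different self-similar-type profiles at a sequence of geometrically spaced scales, so that along one subsequence of times the solution ``sees'' a critical profile with one value of $\delta$, and along another subsequence it sees a critical profile with a larger $\delta$, preventing convergence at a fixed $x_0$. Concretely, recall from Theorem \ref{thm:large time4} that the critical profile with parameter $\delta$ is $\frac{1}{c}\bar F(cx)$ with $c = \frac{27}{8\delta^3}$, and that $\bar F(x)\sim \delta_* x^{2/3}$ as $x\to\infty$ for the normalization $\delta_*$ corresponding to $x_0 = 4/27$. I would fix two values $0 < \delta_1 < \delta_2 < \infty$ and build $F_0$ so that
\[
\liminf_{x\to\infty}\frac{F_0(x)}{x^{2/3}} = \delta_1,
\qquad
\limsup_{x\to\infty}\frac{F_0(x)}{x^{2/3}} = \delta_2,
\]
by letting $F_0(x)/x^{2/3}$ oscillate slowly (on a logarithmic scale in $x$) between $\delta_1$ and $\delta_2$ while keeping $0\le \partial_x F_0\le 1$ a.e.\ and $0\le F_0(x)\le x$, $F_0$ sublinear; the slow oscillation is what makes the Lipschitz and monotonicity constraints compatible with the prescribed $\liminf/\limsup$.

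The core analytic input is a scaling/comparison argument. Equation \eqref{HJ-1} is invariant (in an appropriate sense) under the rescaling $F(x,t)\mapsto \frac{1}{\lambda}F(\lambda x, t)$ combined with a time shift — this is exactly the mechanism behind the one-parameter family in Theorem \ref{thm:characterization3} and the identity $c = \frac{27}{8\delta^3}$ in Theorem \ref{thm:large time4}. I would use finite-speed-of-propagation / localization estimates for \eqref{HJ-1}: for a fixed $x_0$, the value $F(x_0,t)$ depends, up to exponentially small corrections controlled by the singular term $F/x$, only on the initial data $F_0$ restricted to $x$ in a bounded range that grows at a controlled rate in $t$. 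Then, sandwiching $F_0$ between two shifted critical data — one with parameter close to $\delta_1$ and one with parameter close to $\delta_2$ — on the relevant window, and invoking the comparison principle together with Theorem \ref{thm:large time4} applied to those comparison data, I would show that at suitably chosen times $t_k\to\infty$ (aligned with the oscillation of $F_0$) the solution is near the $\delta_1$-profile, while at other times $s_k\to\infty$ it is near the $\delta_2$-profile. Since these two stationary profiles are distinct and both are nonzero, their values at $x_0$ differ, so $\lim_{t\to\infty}F(x_0,t)$ cannot exist.

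The main obstacle I anticipate is making the localization rigorous in the presence of the singular zeroth-order term $\frac{F}{x}$: standard finite-speed-of-propagation arguments for Hamilton-Jacobi equations must be adapted because this term is not Lipschitz near $x=0$ and couples scales. I would handle this by working away from $x=0$ (the relevant $x_0$ is fixed and positive) and by using the a~priori bound $0\le F\le x$ to treat $\frac{F}{x}\in[0,1]$ as a bounded, though $x$-dependent, forcing term, then building explicit super- and subsolutions out of translated/rescaled copies of $\bar F$ plus small time-dependent corrections. A secondary technical point is verifying that the oscillating $F_0$ can be chosen genuinely Lipschitz with $0\le\partial_x F_0\le1$ a.e.: because the target ratio oscillates between $\delta_1$ and $\delta_2$ over intervals $[x,2x]$ (say), the required change in $F_0$ over such an interval is $O(x^{2/3})$, which is $o(x)$, so the slope constraint is satisfiable for large $x$; near $x=0$ one simply sets $F_0(x)=x$ or a smooth truncation. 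Finally I would record that the constructed $F_0$ satisfies \eqref{con:A1} in the a.e.\ sense and \eqref{lim-not-exist}, and that the two subsequential limits at $x_0$ are separated by a positive gap, completing the proof.
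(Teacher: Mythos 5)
Your strategy is essentially the paper's: build $F_0$ by alternating between two rescaled stationary profiles $F_1(x)=\frac{1}{c_1}\bar F(c_1x)$ and $F_2(x)=\frac{1}{c_2}\bar F(c_2x)$ on successive windows going to infinity, and use finite speed of propagation to localize $F(x_0,t)$ to a bounded range of initial positions. The paper obtains this localization exactly from the backward characteristics: since $0\le \partial_x F\le 1$, the optimal paths satisfy $-\tfrac32\le\dot X\le-\tfrac12$, so $F(x_0,t)$ depends only on $F_0$ restricted to $[x_0+\tfrac12 t,\,x_0+\tfrac32 t]$. Note this is exact, not ``up to exponentially small corrections'': the discount factor $e^{-\int ds/\gamma}$ weights the running cost and terminal data but does not enlarge the domain of dependence.

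The one step that does not close as written is your plan to evaluate $F(x_0,t_k)$ by ``invoking the comparison principle together with Theorem~\ref{thm:large time4} applied to those comparison data.'' Theorem~\ref{thm:large time4} is a qualitative statement about the limit $t\to\infty$ with no rate, so it gives no control at the specific finite times $t_k$ at which your window aligns with one profile; and a global sandwich between two critical comparison data only yields $\liminf/\limsup$ bounds, which are consistent with convergence and hence cannot by themselves rule it out. The paper avoids this entirely: the sequences $a_i$, $t_i$ are chosen so that the whole domain of dependence of $(x_0,t_k)$ lies inside an interval on which $F_0$ coincides \emph{exactly} with one of the stationary solutions $F_j$; since $F_j$ solves \eqref{HJ-s}, one gets $F(x_0,t_k)=F_j(x_0)$ on the nose, with no asymptotic theorem needed, and the two subsequential values differ by Lemma~\ref{lem:comparison}. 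Your secondary points are sound and match the paper's construction: the slope constraint is satisfiable because the oscillation amplitude is $O(x^{2/3})=o(x)$ (the paper joins the two profiles by constant and affine segments to keep $0\le\partial_x F_0\le 1$ a.e.), and the fixed $x_0>0$ keeps the argument away from the singularity of $F/x$.
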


\smallskip

See \cite{BarlesSouganidis2000, Ishii2009} for some related results.

\subsection*{Organization of the paper}
The proof of Theorem~\ref{thm:characterization3} is given in  Section~\ref{sec:characterization}, which also contains other characterization results of viscosity solutions to \eqref{HJ-s}.
The proofs of Theorems~\ref{thm:large time2}--\ref{thm:large time3} are given in Section~\ref{sec:large-time}.
In Section~\ref{sec:nonconvergence}, we give the proof of Theorem~\ref{thm:nonconvergence}, where the initial condition $F_0$ is constructed explicitly.

\section{Characterization of all stationary sublinear solutions}\label{sec:characterization}

This section is devoted to prove Theorem~\ref{thm:characterization3}.
In order to do so, we need some preparation.

\smallskip

\begin{proposition}\label{prop:characterization1}
Let $F$ be a solution to \eqref{HJ-s} such that F satisfies
\begin{equation}\label{eq:A1}
\begin{cases}
F \in C^2((0,\infty)) \cap C([0,\infty)), \\
0 < \partial_x F(x) <1 \qquad \text{ for all $x\in (0,\infty)$.}
\end{cases}
\end{equation}
Then, there exists $c>0$ such that
\[
F(x) = \frac{1}{c} \bar F(cx) \qquad \text{ for all } x\geq 0.
\]
Here, $\bar F: [0,\infty) \to [0,\infty)$ is such that $\bar F(0)=0$, and
\[
\partial_x \bar F(x) \defeq 
\frac{1}{\sqrt{x}} \brak[\Bigg]{ \paren[\Big]{\frac{\sqrt{x+x_0} + \sqrt{x}}{2}  }^{1/3} - \paren[\Big]{\frac{\sqrt{x+x_0} - \sqrt{x}}{2}  }^{1/3} } 
\]
for $x_0=\frac{4}{27}$.
\end{proposition}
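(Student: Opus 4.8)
The plan is to treat \eqref{HJ-s} as an ODE on $(0,\infty)$ under the assumption \eqref{eq:A1}, which guarantees that $F$ is classical there and that $p \defeq \partial_x F$ takes values strictly between $0$ and $1$. First I would rewrite the stationary equation as an explicit first-order ODE for $F$: since $\frac{1}{2}(p-1)(p-2) + \frac{F}{x} - 1 = 0$, we get $(p-1)(p-2) = 2\bigl(1 - \frac{F}{x}\bigr)$, i.e. $p^2 - 3p + 2\frac{F}{x} = 0$. Because $0 < p < 1$, the relevant root is $p = \frac{3 - \sqrt{9 - 8F/x}}{2}$ (the other root exceeds $1$), so $\partial_x F = \frac{3 - \sqrt{9 - 8F/x}}{2}$, and this requires $0 \le F/x \le 1$ automatically consistent with the state constraint. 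I would then look for the self-similar structure: set $F(x) = \frac{1}{c}\bar F(cx)$ and check that the equation is invariant under this scaling, which it is because each term scales like $x^{-1}\cdot(\text{value of }F)$ or is dimensionless in the $p$ variable; this reduces the problem to identifying one canonical profile $\bar F$ and then the free constant $c>0$ records the one-parameter family. So the core is to solve $\partial_x \bar F = \frac{3 - \sqrt{9 - 8\bar F/x}}{2}$ with $\bar F(0)=0$ and exhibit the closed form claimed.

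Next I would carry out the integration. The natural substitution is $u = F/x$ (or equivalently work with $w$ defined by $9 - 8F/x = $ something), turning the ODE into a separable or homogeneous-type equation: with $F = ux$ we have $\partial_x F = u + x u'$, so $x u' = \frac{3 - \sqrt{9-8u}}{2} - u$. One then simplifies the right-hand side; writing $9 - 8u = v^2$ with $v \in (1,3)$ (since $0<u<1$), we get $u = \frac{9-v^2}{8}$, and $\frac{3-v}{2} - u = \frac{3-v}{2} - \frac{9-v^2}{8} = \frac{4(3-v) - (9-v^2)}{8} = \frac{v^2 - 4v + 3}{8} = \frac{(v-1)(v-3)}{8}$. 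Also $du = -\frac{v}{4}\,dv$, so the separated equation becomes $\frac{dx}{x} = \frac{du}{\frac{(v-1)(v-3)}{8}} = \frac{-\frac{v}{4}\,dv}{\frac{(v-1)(v-3)}{8}} = \frac{-2v\,dv}{(v-1)(v-3)}$. Partial fractions on $\frac{-2v}{(v-1)(v-3)}$ gives $\frac{1}{v-1} - \frac{3}{v-3}$, which integrates to $\log|v-1| - 3\log|v-3|$, hence $\log x = \log\frac{v-1}{(3-v)^3} + \text{const}$. Choosing the constant via the boundary condition $F(0)=0$ — which forces $u \to 0$, i.e. $v \to 3$, as $x\to 0$ — pins down the trajectory. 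Inverting $x = C\frac{v-1}{(3-v)^3}$ to express $v$, and hence $p = \frac{3-v}{2}$, as a function of $x$ is where the cube roots arise: one solves a cubic for $3-v$ in terms of $x$ and lands on the Cardano-type formula with $x_0 = \frac{4}{27}$ after fixing $C$ by, say, requiring $\partial_x \bar F(0) = 1$. Finally I would verify that the resulting $\partial_x \bar F(x) = \frac{1}{\sqrt x}\bigl[(\tfrac{\sqrt{x+x_0}+\sqrt x}{2})^{1/3} - (\tfrac{\sqrt{x+x_0}-\sqrt x}{2})^{1/3}\bigr]$ indeed satisfies the ODE by direct substitution, and that $\bar F(x) = \int_0^x \partial_x \bar F$ is well-defined, nonnegative, sublinear, and stays in $[0,x]$.

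The main obstacle I anticipate is twofold. First, the algebraic inversion: going from the implicitly defined trajectory $x = C\,(v-1)/(3-v)^3$ to the explicit cube-root formula requires solving a depressed cubic and carefully selecting the branch that matches $v\in(1,3)$ and the boundary behavior; keeping track of which root is admissible (and that $x_0 = 4/27$ emerges precisely, rather than some other normalization) is the delicate bookkeeping. Second, and more conceptually, the boundary condition at $x=0$ is a degenerate/singular point of the ODE (the term $F/x$ is a $0/0$ form there), so I must argue that among solutions of the ODE on $(0,\infty)$ only the one with the correct asymptotics $F(x)/x \to $ (a definite limit, namely $\partial_x F(0) = 1$ forced by $F\ge 0$, $F$ sublinear, and the equation) extends continuously to $x=0$ with $F(0)=0$; a phase-line analysis near $v=3$ showing all nearby trajectories are captured, together with uniqueness of the $C^2$ solution once the constant $c$ is fixed, closes this. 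The scaling reduction and the separation of variables are routine; the payoff steps are the cubic inversion and the careful treatment of the singular endpoint.
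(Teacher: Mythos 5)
Your route is essentially the paper's: both reduce \eqref{HJ-s} to a first integral for the derivative and then invert by Cardano. Your substitution $v=\sqrt{9-8F/x}$ is the paper's $G=1-\partial_xF$ in disguise (one has $G=\tfrac{v-1}{2}$, $1-G=\tfrac{3-v}{2}$, so your relation $x=C\,\frac{v-1}{(3-v)^3}$ is exactly the paper's $\frac{G}{(1-G)^3}=cx$ with $C=4/c$), and the root selection, the scaling reduction, and the cubic inversion all match. The computation up to and including the separated relation is correct.

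There is, however, a genuine error in how you close the argument at $x=0$ and normalize the constant. First, the asymptotics are backwards: $F(0)=0$ does \emph{not} force $u=F/x\to0$ as $x\to0^+$. Since $G/(1-G)^3=cx\to0$ and $G\in(0,1)$, one has $G\to0$, hence $\partial_xF\to1$, $u\to1$ and $v\to1^+$ as $x\to0^+$; the limit $v\to3$ (i.e.\ $u\to0$) is the regime $x\to\infty$, where $\bar F$ becomes sublinear. Matching the Cardano branch to ``$v\to3$ at $x\to0$'' would select the wrong root. Second, and more seriously, neither $F(0)=0$ nor $\partial_x\bar F(0)=1$ can ``pin down the trajectory'' or ``fix $C$'': \emph{every} member of the family $x\mapsto\tfrac1c\bar F(cx)$ satisfies both conditions, so they determine nothing, and if they did single out one solution the proposition's conclusion (a genuine one-parameter family) would be false. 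The constant of integration appearing in $\log x=\log\frac{v-1}{(3-v)^3}+\mathrm{const}$ \emph{is} the free parameter $c$ of the statement; the canonical profile $\bar F$ is simply \emph{defined} by the normalization $c=1$, for which the cubic becomes $z^3+\tfrac1x z-\tfrac1x=0$ (with $z=\partial_x\bar F$) and $x_0=\tfrac{4}{27}$ drops out of the Cardano radicand $\tfrac{q^2}{4}+\tfrac{p^3}{27}=\tfrac{1}{4x^3}\left(x+\tfrac{4}{27}\right)$ — it is an artifact of that normalization, not of any boundary condition. Relatedly, your proposed ``phase-line analysis near $v=3$'' is aimed at the wrong endpoint and is unnecessary: once the first integral is in hand, the strict monotonicity of $y\mapsto y/(1-y)^3$ on $(0,1)$ determines $G(x)$ uniquely from $cx$, which is all the uniqueness the proof requires.
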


\smallskip

This proposition gives more or less a similar conclusion as that in Theorem \ref{thm:characterization3} but it requires a more restrictive condition \eqref{eq:A1} on $F$.

\smallskip

\begin{proof}
Letting $G = 1 - \partial_x F$ and from~\eqref{HJ-s}, we get 
\begin{equation*}
    \frac{1}{2} G(G+1) - \frac{1}{x} \int_0^x G(y) \, dy = 0 \,.
\end{equation*}
Therefore, for $x>0$,
\begin{equation*}
    \frac{1}{2} x G(G+1) - \int_0^x G(y) \, dy = 0 \,.
\end{equation*}
Differentiating in $x$,
\begin{equation*}
    \frac{1}{2} G(G+1) + \frac{1}{2} x( 2G\partial_x G + \partial_x G) - G = 0 \,, 
\end{equation*}
which, after rearranging terms, gives 
\begin{equation*}
    \frac{1}{x} = \frac{\partial_x G}{G} + \frac{3 \partial_x G}{1-G} \,.
\end{equation*}
Integrating this equality, we get
\begin{equation} \label{e:transcendental}
    \frac{G(x)}{(1- G(x))^3} = cx \,,
\end{equation}
for $x >0$ and some fixed constant $c>0$.

\medskip 

Now, let $\bar G$ be a solution to the above equation when $c=1$.
For a given $x>0$, consider the equation
\begin{equation*}
    \begin{dcases}
        \frac{y}{(1-y)^3} = x \,, \\
        0<y <1  \,.
    \end{dcases}
\end{equation*}
Denote  by $\phi(y) = x(1-y)^3 - y$ for $y\in [0,1]$. 
As $\phi'(y) = -3x(1-y)^2 - 1 < 0$, $\phi(y)$ is strictly decreasing on $[0,1]$. Since $\phi(0) = x$ and $\phi(1) = -1$, there exists a unique $y=y_x \in (0,1)$ such that $\phi(y_x) = 0$. 
Letting $z = 1- y$, we have that
\begin{equation*}
    z^3 + \frac{1}{x} z - \frac{1}{x} = 0 \,.
\end{equation*}
The Cardano formula says that the real root $\bar z_x \in (0,1)$ of this equation is given by 
\begin{equation*}
    \bar z_x = \frac{1}{\sqrt{x}} \brak[\Big]{ \paren[\Big]{\frac{\sqrt{x+x_0} + \sqrt{x}}{2}  }^{1/3} - \paren[\Big]{\frac{\sqrt{x+x_0} - \sqrt{x}}{2}  }^{1/3} }   \,,
\end{equation*}
where $x_0 = \frac{4}{27}$.
This implies, by definition of $ \bar G$,
\begin{equation*}
   \partial_x \bar F(x) =  \bar z_x, \quad \text{ and } \quad \bar G(x) = 1 - \partial_x \bar F(x) \,.
\end{equation*}
This, in fact, shows that $\bar F$ solves the equation~\eqref{HJ-s}. 

We now deal with general $c>0$.
To prove the scaling property, for $C>0$ denote $F_C(x) = \frac{1}{C} F(Cx)$, we have
\begin{equation*}
    G_C(x) \defeq 1 - \partial_x F_C(x) = 1 - \partial_x F(Cx) = G(Cx) \,.
\end{equation*}
Using equation~\eqref{e:transcendental} for $C = 1/c$, we get
\begin{equation*}
    \frac{G_C(x)}{(1- G_C(x))^3} = x \,.
\end{equation*}
By uniqueness of the solution to the above equation that satisfies $0< G_C <1$, we deduce that $G_C = \bar G$.
 Therefore, $F_C= \bar F$.
 Thus, for each solution $F$ of equation~\eqref{HJ-s} satisfying~\eqref{eq:A1}, there exists a $c>0$ so that $F(x) = \frac{1}{c}\bar F(cx)$.
\end{proof}
\begin{remark}
    As noted in~\cite{DegondLiuEA17}, $-\frac{4}{27}$ is the minimum value of the function $u\mapsto \frac{u}{(1-u)^3}$ at $u = -\frac{1}{2}$.
    
    \smallskip
    
    Besides, we have a bit further understanding of $\bar F$ as following.
    Denote by
    \[
    \alpha(x)=\paren[\Big]{\frac{\sqrt{x+x_0} + \sqrt{x}}{2}  }^{1/3}, \qquad
    \beta(x)=\paren[\Big]{\frac{\sqrt{x+x_0} - \sqrt{x}}{2}  }^{1/3}.
    \]
    Then, $\alpha(x)\beta(x)=\frac{1}{3}$, and
    \begin{align*}
    \partial_x \bar F(x) 
    &= \frac{\alpha(x) - \beta(x)}{\sqrt{x}}  = \frac{1}{\sqrt{x}} \cdot \frac{\alpha(x)^3 - \beta(x)^3}{\alpha(x)^2+\beta(x)^2 +\alpha(x) \beta(x)} \\
    &= \frac{1}{\alpha(x)^2+\beta(x)^2 +\alpha(x) \beta(x)}
    = \frac{1}{\alpha(x)^2+\frac{1}{9\alpha(x)^2} +\frac{1}{3}}.
    \end{align*}
    This gives us some further qualitative properties of $\partial_x \bar F(x)$.
    Indeed, it is clear that $\partial_x \bar F(0)=1$ as $\alpha(0)=\beta(0)=\frac{1}{\sqrt{3}}$.
    Note, also that as $z \mapsto \frac{1}{z^2 + \frac{1}{9z^2} + \frac{1}{3}}$ is strictly decreasing for $z > \frac{1}{\sqrt{3}}$ and
    \begin{equation} \label{eq:rateofdecay}
        \lim_{x\to \infty} x^{1/3}\partial_x \bar F(x) = 1 \,,
    \end{equation}
    $x\mapsto \partial_x \bar F(x)$ is strictly decreasing and $\partial_x \bar F(x)$ decays like $x^{-1/3}$ as $x\to \infty$.
    This implies  that $\bar F$ is sublinear as
    \[
    \lim_{x\to \infty} \frac{\bar F(x)}{x} = \lim_{x \to \infty} \partial_x \bar F(x)=0 \,.
    \]
\end{remark}

Next is another characterization of solutions to \eqref{HJ-s}.

\smallskip

\begin{proposition}\label{prop:characterization2}
Let $F$ be a solution to \eqref{HJ-s} such that F satisfies
\begin{equation}\label{eq:A2}
\begin{cases}
F \text{ is concave on $[0,\infty)$}, \\
0 \leq \partial_x F(x) <1 \qquad \text{ for a.e. $x\in (0,\infty)$.}
\end{cases}
\end{equation}
Then, either $F \equiv 0$ or there exists $c>0$ such that
\[
F(x) = \frac{1}{c} \bar F(cx) \qquad \text{ for all } x\geq 0.
\]
\end{proposition}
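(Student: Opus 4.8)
The plan is to reduce Proposition~\ref{prop:characterization2} to Proposition~\ref{prop:characterization1} by showing that any solution $F$ satisfying~\eqref{eq:A2} is in fact either identically zero or smooth with strictly positive derivative on $(0,\infty)$, and hence falls under the scope of the earlier result. First I would examine the boundary behavior: since $F$ is concave, $0\le \partial_x F<1$ a.e., and $F(0)=0$ (forced by $0\le F(x)\le x$), the function $x\mapsto \partial_x F(x)$ is nonincreasing, so it has a limit $\ell\in[0,1]$ as $x\downarrow 0$ and $\ell = \partial_x F(0^+)$. Plugging $x\to 0^+$ into the equation $\frac12(\partial_xF-1)(\partial_xF-2)+\frac{F}{x}-1=0$ and using $F(x)/x\to \ell$ (which holds because $F(0)=0$ and $F$ is Lipschitz with $\lim_{x\to 0}\partial_x F(x)=\ell$), one gets $\frac12(\ell-1)(\ell-2)+\ell-1=0$, i.e. $\ell^2-\ell=0$, so $\ell\in\{0,1\}$. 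Concavity then forces $\partial_xF\le\ell$ everywhere: if $\ell=0$, then $\partial_xF\equiv 0$, so $F\equiv 0$; if $\ell=1$, we are in the ``generic'' case.

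Next, assuming $\ell=1$, I would upgrade regularity. Rewrite the stationary PDE, with $G=1-\partial_xF\in(0,1]$, in the integrated form $\frac12 xG(G+1)=\int_0^x G(y)\,dy$ exactly as in the proof of Proposition~\ref{prop:characterization1}; this identity is valid in the viscosity/a.e. sense for concave $F$ since $F$ is locally Lipschitz and the equation holds a.e. The right-hand side is a $C^1$ function of $x$, and since $G$ is monotone (because $\partial_x F$ is nonincreasing, $G$ is nondecreasing) and bounded, the left-hand side $\frac12 xG(x)(G(x)+1)$ is continuous; solving the resulting quadratic in $G(x)$ for each fixed $x$ shows $G$ is continuous, then the identity shows $xG(x)(G(x)+1)\in C^1$, which by continuity of $G$ and the quadratic formula gives $G\in C^1((0,\infty))$, hence $F\in C^2((0,\infty))$. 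It remains to check $0<\partial_xF<1$, i.e. $0<G<1$, on all of $(0,\infty)$: the transcendental relation $G/(1-G)^3=cx$ derived in Proposition~\ref{prop:characterization1} holds here too by the same differentiation argument, and since $x>0$ this forces $G(x)\in(0,1)$ strictly (the endpoints $G=0$ and $G=1$ correspond to $x=0$ and $x=\infty$ respectively), and also $c>0$. Thus $F$ satisfies~\eqref{eq:A1}, and Proposition~\ref{prop:characterization1} yields $F(x)=\frac1c\bar F(cx)$.

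The main obstacle is the regularity bootstrap in the degenerate direction: one must make sure that from only ``concave plus a.e.\ solves the PDE'' one can legitimately differentiate the integrated identity and run the ODE argument, rather than getting stuck with a merely Lipschitz $G$ that could have jumps. The key point that makes this go through is that concavity gives monotonicity of $G$, and a bounded monotone function that satisfies a pointwise polynomial identity with a $C^1$ function cannot jump — any jump in $G$ would produce a jump in $\frac12 xG(G+1)=\int_0^x G$, contradicting continuity of the integral. Once continuity of $G$ is in hand, the identity forces $C^1$ smoothness and the rest is the ODE computation already carried out. A secondary subtlety is justifying $F(x)/x\to\ell$ as $x\to 0^+$ and the correct sign/branch in the quadratic for $G$; both are handled by the a~priori bounds $0\le F(x)\le x$ and concavity, which pin down that we are on the branch with $G$ near its boundary value.
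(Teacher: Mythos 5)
Your overall strategy is the same as the paper's: dispose of the degenerate case via concavity, bootstrap regularity so that $F$ satisfies \eqref{eq:A1}, and then invoke Proposition~\ref{prop:characterization1}. The regularity bootstrap itself is sound and matches the paper's in substance: the paper rules out a jump of $\partial_x F$ at $z$ by passing to one-sided limits in the pointwise equation and using injectivity of $u\mapsto\frac12(u-1)(u-2)$ on $(0,1)$, which is the same mechanism as your ``a jump in $G$ would contradict continuity of $\int_0^x G$'' argument; and both proofs then solve the quadratic for $\partial_x F$ (equivalently for $G$) to get $C^1$, hence $C^\infty$. Your preliminary computation $\ell=\partial_xF(0^+)\in\{0,1\}$ is correct but not needed in the paper's route.

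There is, however, one circular step: you claim that $0<G<1$ on all of $(0,\infty)$ \emph{because} the transcendental relation $G/(1-G)^3=cx$ forces it. But that relation is obtained by dividing by $G$ and $1-G$ in the integration $\frac1x=\frac{\partial_xG}{G}+\frac{3\partial_xG}{1-G}$, so its derivation already presupposes $0<G<1$; you cannot use it to establish that bound. Moreover, your dichotomy at $x=0$ does not by itself exclude the possibility that $\ell=1$ while $\partial_xF(z)=0$ at some interior $z>0$. The correct (and short) argument is the one the paper opens with: if $\partial_xF(z)=0$ for some $z>0$, concavity gives $\partial_xF\equiv0$ on $[z,\infty)$, the equation then forces $F\equiv0$ on $[z,\infty)$, and concavity together with $F\ge0$ and $F(0)=0$ yields $F\equiv0$ everywhere. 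Hence either $F\equiv0$ or $\partial_xF>0$ on all of $(0,\infty)$; the upper bound $\partial_xF<1$ everywhere then follows from the a.e.\ hypothesis plus the monotonicity and (now established) continuity of $\partial_xF$. With this substitution your proof closes and coincides with the paper's.
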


\begin{proof}
If there exists $z\in (0,\infty)$ such that $\partial_x F(z)=0$, then by the concavity of $F$, we imply that $\partial_x F(x)=0$ for all $x \geq z$.
Use this relation in \eqref{HJ-s} to get further that $F(x)=0$ for all $x\geq z$. Thus, $F \equiv 0$.

\medskip

We now only need to consider the case that $0<\partial_x F(x)<1$ for a.e. $x\in (0,\infty)$.
As $F$ is concave, $x \mapsto \partial_x F(x)$ is decreasing whenever $\partial_x F(x)$ is defined.
Let us first show that $F \in C^1((0,\infty))$.
If this is not the case, then there exists $z\in (0,\infty)$ such that
\[
\lim_{x \to z^-} \partial_x F(x) = a > b = \lim_{x \to z^+} \partial_x F(x)
\]
for some $0<b<a<1$.
By using \eqref{HJ-s} at differentiable points $x$ of $F$ and let $x \to z^-$, $x \to z^+$, respectively, we yield
\[
\frac{1}{2}(a-1)(a-2) = \frac{1}{2}(b-1)(b-2) = 1 - \frac{F(z)}{z},
\]
which is absurd.
Thus, $F \in C^1((0,\infty))$, and of course, $0 \leq F(x)<x$ for $x>0$.

We next show that $F \in C^2((0,\infty))$.
Equation \eqref{HJ-s} can be rewritten as
\[
(\partial_x F)^2  - 3 \partial_x F  + 2 \frac{F}{x}=0,
\]
which is a quadratic equation in terms of $\partial_x F$ under the condition that $\partial_x F \in (0,1)$.
Thus,
\[
\partial_x F(x) = \frac{3 - \sqrt{9-8\frac{F(x)}{x}}}{2}
\]
As $F \in C^1((0,\infty))$, we deduce that the right hand side of the above is $C^1$ as well, which means that $\partial_x F \in C^1((0,\infty))$.
In fact, by induction, we are able to yield that $F \in C^\infty((0,\infty))$.
We then see that $F$ satisfies \eqref{eq:A1}, and use Proposition \ref{prop:characterization1} to conclude.
\end{proof}

We are ready for the proof of our main result in this section.

\smallskip

\begin{proof}{(\bf Proof of Theorem \ref{thm:characterization3})}
Firstly, we have that
\[
\frac{1}{2}(\partial_x F - 1)(\partial_x F - 2) =1 - \frac{F}{x} \leq 1 \qquad \text{ in } (0,\infty),
\]
which yields that $F$ is Lipschitz on $[0,\infty)$, and $0 \leq \partial_x F(x) \leq 3$ for a.e. $x\in [0,\infty)$. 
At each differentiable point $x$ of $F(x)$, $\partial_x F(x)$ satisfies a quadratic equation
\[
(\partial_x F)^2  - 3 \partial_x F  + 2 \frac{F}{x}=0,
\]
which means that
\[
\partial_x F(x) = \frac{3 \pm \sqrt{9-8\frac{F(x)}{x}}}{2}.
\]
We claim first that
\begin{equation}\label{eq:claim1}
\partial_x F(x) = \frac{3 - \sqrt{9-8\frac{F(x)}{x}}}{2} \qquad \text{ for a.e. } x\in [0,\infty).
\end{equation}
Assume otherwise that \eqref{eq:claim1} does not hold true, then there exists $z\in (0,\infty)$ such that $F$ is differentiable at $z$ and
\[
\partial_x F(z) = \frac{3 + \sqrt{9-8\frac{F(z)}{z}}}{2} \geq 2.
\]
On the other hand, by the comparison principle, $0\leq F(x)\leq x$ for all $x \in [0,z]$, and $F$ is Lipschitz, we are able to find $y \in (0,z)$ such that $F$ is differentiable at $y$ and $\partial_x F(y) \leq 1$.
Set
\[
\phi(x) = F(x) - \frac{3}{2}x \qquad \text{ for } x \in [y,z].
\]
Of course, $\phi$ obtains its minimum at some point $\bar x \in [y,z]$.
It is not hard to see that $\bar x \neq y$ and $\bar x \neq z$ as 
\[
\partial_x \phi(y) \leq - \frac{1}{2} \quad \text{ and } \quad
\partial_x \phi(z) \geq \frac{1}{2}.
\]
So, $\bar x \in (y,z)$, which means that $F(x)- \frac{3}{2}x$ has a local minimum at $\bar x$.
By the viscosity supersolution test to \eqref{HJ-s}, we yield that
\[
\frac{1}{2}\left(\frac{3}{2}-1\right)\left(\frac{3}{2}-2\right) + \frac{F(\bar x)}{\bar x} - 1 = \frac{F(\bar x)}{\bar x} -\frac{9}{8} \geq 0,
\]
which is absurd.

\medskip

Thus, \eqref{eq:claim1} holds.
It is important noting that the right hand side of \eqref{eq:claim1} is continuous in $x$.
By the fundamental theorem of calculus, we are able to write
\[
 F(x) = \int_0^x \frac{1}{2}\left(3 - \sqrt{9-8\frac{F(y)}{y}}\right)\,dy,
\]
and hence, $F \in C^1((0,\infty))$ and  \eqref{eq:claim1} holds true for all $x\in (0,\infty)$.
In fact, we have $F \in C^\infty((0,\infty))$.
By using the fact that $0\leq F(x) \leq x$, we imply further that
\[
0 \leq \partial_x F(x) \leq 1,
\]
and $\partial_x F(x)=0$ if and only if $F(x)=0$.
In particular, $F$ is nondecreasing.
It only remains to prove that $\partial_x F(x) <1$ for all $x \geq 0$.

\medskip

If $F \equiv 0$, then there is nothing to consider.
We hence only need to focus on the case $F \neq 0$.
Since $F$ is also sublinear, we are able to find $z \geq 0$ such that
\[
0<F(x)<x \qquad \text{ for all } x > z.
\]
Use this in \eqref{eq:claim1} to yield that
\begin{equation}\label{eq:claim2}
0 < \partial_x F(x) <1  \qquad \text{ for all } x > z.
\end{equation}
Thanks to \eqref{eq:claim2}, we are able to repeat the first part of the proof of Proposition \ref{prop:characterization1} to have that, for $G=1-\partial_x F$,
\[
\frac{G(x)}{(1-G(x))^3} = cx \qquad \text{ for all } x > z.
\]
Here, $c>0$ is some fixed constant.
Without loss of generality, we assume $c=1$.
By repeating the later part of the proof of Proposition \ref{prop:characterization1},
$G(x) = \bar G(x)$ for $x > z$, and hence,
\[
\partial_x F(x) = \partial_x \bar F(x) = \frac{1}{\alpha(x)^2+\beta(x)^2 +\alpha(x) \beta(x)} \qquad \text{ for all } x > z.
\]
We finally claim that
\begin{equation}\label{eq:claim3}
\partial_x F(x) = \partial_x \bar F(x)  \qquad \text{ for } x > 0,
\end{equation}
that is, we can let $z=0$ in \eqref{eq:claim2}.
Indeed, if this is not the case, then there is $\bar z>0$ such that \eqref{eq:claim2} holds for  $z=\bar z$, and $\partial_x F(\bar z) \in \{0,1\}$.
On the other hand, 
\[
\partial_x F(\bar z) =\lim_{x \to\bar z^+} \partial_x F(x) =\lim_{x \to\bar z^+} \partial_x \bar F(x)  = \partial_x \bar F(\bar z) \in (0,1),
\]
which is absurd.
Thus, \eqref{eq:claim3} holds, and $F= \bar F$.
The proof is complete.
 \end{proof}

\section{Large time behavior of \eqref{HJ-1}}\label{sec:large-time}

Let $F$ be the viscosity solution to \eqref{HJ-1}.
Under assumption \eqref{con:A1}, we have that $F$ is sublinear in $x$, globally Lipschitz, and
\begin{equation}\label{eq:bound on F_x}
0 \leq \partial_x F(x,t) \leq 1 \quad \text{ for a.e. } (x,t) \in [0,\infty)^2.
\end{equation}
We refer the reader to \cite[Lemma 3.1]{TranVan2019} for a proof of this observation.
This assumption \eqref{con:A1} is, however, not enough to obtain large time behavior of the viscosity solution $F(x,t)$ to \eqref{HJ-1}.
It turns out that the behavior of $F_0(x)$ for $x \to \infty$ does play an important role in determining the behavior of $F(x,t)$ as $t\to \infty$.
If we look into the behavior of the stationary solution $\bar F$, then we see that by~\eqref{eq:rateofdecay},
\[
\lim_{x\to \infty}\frac{\bar F(x)}{x^{2/3}} = 
\lim_{x\to \infty}\frac{\partial_x \bar F(x)}{\frac{2}{3}x^{-1/3}} = \frac{3}{2}.
\]

This gives us some intuition that $x^{2/3}$ represents a critical growth of initial condition, and the large time behavior of $F$ depends crucially on the relative growth of $F_0$ compared to this critical growth.

\subsection{Initial condition with subcritical growth} \label{sec:subcritical}
In this subsection, we study the viscosity solution with subcritical initial data.

\smallskip

We first recall the representation of the viscosity solution to \eqref{HJ-1} from optimal control theory. 
For $(x,t) \in [0,\infty)^2$, denote by
\begin{multline}\label{func:value}
V(x,t) = \inf_{\substack{\gamma \in {\rm AC}([0,t],[0,\infty)) \\ \gamma(0)=x}}
\Big\{
\int_{0}^{t\land\tau_x}\frac{1}{2}e^{-\int_0^s\frac{d\lambda}{\gamma(\lambda)}}\,
\left(-\dot{\gamma}(s)+\frac{3}{2}\right)^2\,ds \, +\\
e^{-\int_0^{t\land\tau_x}\frac{d\lambda}{\gamma(\lambda)}}\,G(\gamma(t\land\tau_x),t-t\land\tau_x)
\Big\}. 
\end{multline}
Here, ${\rm AC}([0,t],[0,\infty))$ is the space of absolutely continuous curves mapping from $[0,t]$ to $[0,\infty)$.
Besides, $t\land\tau_x = \min\{t,\tau_x\}$, and
\begin{align*}
&\tau_x:=\inf\{s\ge0\,:\, \gamma(s)=0\}\le\infty, \\
& 
G(x,t):=
\left\{
\begin{array}{ll}
0 \quad & \text{ if }  x=0, \\
F_0(x) \quad & \text{ if }  t=0.  
\end{array}
\right. 
\end{align*}

Bellman's principle of optimality claims that an optimal policy has the property that whatever the initial state is,  the remaining decisions must constitute an optimal policy with regard to the state resulting from the first decision. 
Following to this principle, we have the following Dynamical Programming Principle. 

\smallskip

\begin{proposition}[Dynamical Programming Principle]\label{prop:DPP}
For $(x,t) \in [0,\infty)^2$ and $h>0$, we have 
\begin{align*}
&V(x,t+h)\\
=&\,
\inf_{\gamma(0)=x}
\Big\{
\int_0^{h\land\tau_x}\frac{1}{2}e^{-\int_0^s\frac{d\lambda}{\gamma(\lambda)}}\left(-\dot{\gamma}(s)+\frac{3}{2}\right)^2\,ds
+
\mathbf{1}_{\{h<\tau_x\}}e^{-\int_0^h\frac{d\lambda}{\gamma(\lambda)}}V(\gamma(h),t)\\
&\qquad \qquad \qquad \qquad \qquad \qquad \qquad \quad+
\mathbf{1}_{\{h\ge\tau_x\}}e^{-\int_0^{\tau_x}\frac{d\lambda}{\gamma(\lambda)}}V(\gamma(\tau_x),t-\tau_x)
\Big\} \\
=&\,
\inf_{\gamma(0)=x}
\Big\{
\int_0^{h\land\tau_x}\frac{1}{2}e^{-\int_0^s\frac{d\lambda}{\gamma(\lambda)}}\left(-\dot{\gamma}(s)+\frac{3}{2}\right)^2\,ds
+
\mathbf{1}_{\{h<\tau_x\}}e^{-\int_0^h\frac{d\lambda}{\gamma(\lambda)}}V(\gamma(h),t)\Big\}.
\end{align*}
Here,
$\mathbf{1}_{\{h<\tau_x\}}=1$ and $\mathbf{1}_{\{h\ge\tau_x\}}=0$ if $h<\tau_x$, 
and $\mathbf{1}_{\{h<\tau_x\}}=0$ and $\mathbf{1}_{\{h\ge\tau_x\}}=1$ if $h\ge \tau_x$. 
\end{proposition}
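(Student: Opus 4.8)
The plan is to prove the middle equality in Proposition~\ref{prop:DPP} via the two standard inequalities for value functions of optimal control problems --- one obtained by restricting an arbitrary trajectory, the other by concatenating near-optimal ones --- and then to deduce the last equality from the elementary fact that $V(0,s)=0$ for every $s\ge0$. Throughout, for an absolutely continuous curve $\gamma$ on a time interval with $\gamma(0)=y$ I write $\tau^{\gamma}$ for its first hitting time of the origin, and I record the two structural identities used repeatedly: for the time-shifted curve $\tilde\gamma(\sigma):=\gamma(h+\sigma)$ one has $\tau^{\tilde\gamma}=\tau^{\gamma}-h$, and, provided $h<\tau^{\gamma}$ (so that $\gamma$ is continuous and strictly positive, hence bounded away from $0$, on the compact interval $[0,h]$), the discount weight factors multiplicatively, $e^{-\int_0^{h+\sigma}\frac{d\lambda}{\gamma(\lambda)}}=e^{-\int_0^{h}\frac{d\lambda}{\gamma(\lambda)}}\,e^{-\int_0^{\sigma}\frac{d\lambda}{\tilde\gamma(\lambda)}}$.

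For the inequality ``$\ge$'', fix any admissible $\gamma$ on $[0,t+h]$ with $\gamma(0)=x$. If $h<\tau^{\gamma}$, split the cost functional in \eqref{func:value} at time $h$; using the two identities above together with $(t+h)\wedge\tau^{\gamma}-h=t\wedge\tau^{\tilde\gamma}$, the cost of $\gamma$ equals the running cost of $\gamma$ on $[0,h]$ plus $e^{-\int_0^h\frac{d\lambda}{\gamma(\lambda)}}$ times the cost of $\tilde\gamma$ for the problem defining $V(\gamma(h),t)$, and the latter is $\ge V(\gamma(h),t)$. If instead $h\ge\tau^{\gamma}$, then $(t+h)\wedge\tau^{\gamma}=\tau^{\gamma}$ and $\gamma(\tau^{\gamma})=0$, so the terminal term $G(\gamma(\tau^{\gamma}),\cdot)=0$ and the cost of $\gamma$ reduces to its running cost on $[0,h\wedge\tau^{\gamma}]$. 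In both cases the cost of $\gamma$ dominates the bracketed expression in the middle line of Proposition~\ref{prop:DPP} evaluated at $\gamma|_{[0,h]}$, so taking the infimum over $\gamma$ yields the inequality.

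For the inequality ``$\le$'', fix $\varepsilon>0$ and choose $\gamma_1$ on $[0,h]$ with $\gamma_1(0)=x$ that is $\varepsilon$-optimal for the right-hand side. If $h<\tau^{\gamma_1}$, choose $\gamma_2$ on $[0,t]$ with $\gamma_2(0)=\gamma_1(h)$ that is $\varepsilon$-optimal for $V(\gamma_1(h),t)$, and let $\gamma$ be the concatenation $\gamma(s)=\gamma_1(s)$ on $[0,h]$ and $\gamma(s)=\gamma_2(s-h)$ on $[h,t+h]$; this curve is absolutely continuous since the pieces agree at $s=h$, and reversing the factorization from the previous step bounds its cost by $\big(\text{RHS}+\varepsilon\big)+e^{-\int_0^h\frac{d\lambda}{\gamma_1(\lambda)}}\,\varepsilon\le\text{RHS}+2\varepsilon$. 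If $h\ge\tau^{\gamma_1}$, the $\varepsilon$-optimality of $\gamma_1$ together with $V(0,t-\tau^{\gamma_1})=0$ gives that the running cost of $\gamma_1$ on $[0,\tau^{\gamma_1}]$ is at most $\text{RHS}+\varepsilon$; extending $\gamma_1|_{[0,\tau^{\gamma_1}]}$ by the constant $0$ on $[\tau^{\gamma_1},t+h]$ produces an admissible $\gamma$ on $[0,t+h]$ whose cost equals exactly that running cost. Either way $V(x,t+h)\le\text{RHS}+2\varepsilon$, and letting $\varepsilon\to0$ completes the proof of the middle equality.

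Finally, the last equality follows because a curve issuing from $x=0$ has $\tau^{\gamma}=0$, whence $V(0,s)=G(0,s)=0$ for all $s\ge0$; thus whenever $h\ge\tau_x$ the term $e^{-\int_0^{\tau_x}\frac{d\lambda}{\gamma(\lambda)}}V(\gamma(\tau_x),t-\tau_x)=e^{-\int_0^{\tau_x}\frac{d\lambda}{\gamma(\lambda)}}V(0,t-\tau_x)$ vanishes and can be dropped from the infimum. The step I expect to require the most care is the bookkeeping around the singular discount weight and the stopping time: one must verify that the weight factors multiplicatively under time translation (which is precisely why the dichotomy $h<\tau_x$ versus $h\ge\tau_x$ appears), that first hitting times transform correctly under restriction, translation, and concatenation, and that the glued curves remain absolutely continuous --- each point is routine, but the singular term $\tfrac1\gamma$ makes it necessary to keep trajectories away from the origin exactly on the interval where the splitting is carried out.
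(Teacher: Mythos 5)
Your argument is correct: the two-inequality scheme (restriction of an arbitrary trajectory for ``$\ge$'', concatenation of $\varepsilon$-optimal pieces for ``$\le$''), together with the observation that $V(0,s)=G(0,s)=0$ kills the third term, is exactly the standard dynamic-programming argument that the paper itself invokes without writing out (it explicitly skips the proof and refers to the optimal-control literature). Your extra care about the multiplicative factorization of the singular discount weight and the behavior of the hitting time under restriction, translation, and gluing is precisely where the only non-generic bookkeeping lies, and you handle it correctly.
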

The proof of Proposition \ref{prop:DPP} is rather standard by using the usual arguments in the optimal control theory (see \cite{L, BCD, Tran19} for instance). 
By Proposition \ref{prop:DPP} and classical techniques in the theory of viscosity solutions, we have the following result.

\smallskip

\begin{proposition}\label{prop:value}
Assume \eqref{con:A1}. 
Let $F$ be the unique viscosity solution to \eqref{HJ-1}.
Then, $F=V$ on $[0,\infty)^2$.
\end{proposition}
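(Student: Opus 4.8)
The plan is to verify the two inequalities $F \le V$ and $F \ge V$ on $[0,\infty)^2$ using the standard verification-theorem machinery of optimal control, adapted to handle the singular term $F/x$ and the exit behavior at $x=0$. The key algebraic fact is that the Hamiltonian $H(x,F,p) = \frac12(p-1)(p-2) + \frac{F}{x} - 1$ can be rewritten, after completing the square, in the form
\[
\partial_t F + \frac{F}{x} + \sup_{v \ge 0}\Bigl\{ -v\,\partial_x F + v - \tfrac{3}{2} \cdot 2 \cdot \bigl(\tfrac{3}{2} - v\bigr) \cdot (\text{const}) \Bigr\} = 0,
\]
or more precisely that $\frac12(p-1)(p-2) - 1 = \frac12 p^2 - \frac32 p - \frac12 = \sup_{v}\{-vp + \ell(v)\}$ with running cost $\ell(v) = \frac12(v-\frac32)^2 - \frac{9}{8} - \frac12$; matching this against the integrand $\frac12(-\dot\gamma + \frac32)^2$ in \eqref{func:value} fixes the precise constants, so the first step is to carry out this Legendre-type computation carefully and confirm the value function \eqref{func:value} is the right one (this amounts to checking the control-theoretic formulation is consistent with \eqref{HJ-1}).

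Granting that, the argument proceeds in the usual two halves. First I would show $V$ is a viscosity \emph{subsolution} of \eqref{HJ-1}: fix a smooth $\varphi$ touching $V$ from above at an interior point $(x_0,t_0)$ with $x_0>0$, use the Dynamical Programming Principle of Proposition~\ref{prop:DPP} with the constant control $\gamma \equiv x_0$ on a short interval $[0,h]$ (so that $\tau_x = \infty$), divide by $h$, and let $h\to 0^+$; the exponential weight $e^{-\int_0^s d\lambda/\gamma(\lambda)}$ contributes the $\frac{V}{x}$ term, the running cost contributes $-\frac18$ plus the $\dot\gamma$-terms, and one obtains $\partial_t\varphi + \frac12(\partial_x\varphi - 1)(\partial_x\varphi - 2) + \frac{\varphi}{x} - 1 \le 0$ at $(x_0,t_0)$. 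Second I would show $V$ is a viscosity \emph{supersolution}: fix $\varphi$ touching $V$ from below, and for each small $h$ pick a nearly optimal curve $\gamma_h$ in the DPP; a standard argument (using that the running cost is coercive in $\dot\gamma$, hence $\gamma_h$ cannot move too fast, so $\gamma_h(s)$ stays near $x_0$ and in particular stays positive for $s$ small when $x_0>0$) gives the reverse inequality in the limit. Then by the comparison principle for \eqref{HJ-1} — which we may invoke from \cite{TranVan2019}, since it underlies the assertion that \eqref{HJ-1} has a \emph{unique} viscosity solution — the subsolution $V$ and the solution $F$ coincide, provided we also check the boundary/initial conditions: $V(x,0) = F_0(x)$ from the definition (take $t=0$), and $V(0,t) = 0$ because $\tau_x = 0$ and $G(0,\cdot) = 0$, matching the constraint $0 \le F \le x$ which forces $F(0,t)=0$.

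The main obstacle is the behavior at the spatial boundary $x=0$, where the equation is singular: one must confirm that the weight $e^{-\int_0^s d\lambda/\gamma(\lambda)}$ is the correct mechanism encoding the Dirichlet-type condition $F(0,t)=0$ (the cost of a trajectory that reaches the origin is killed by the exponential, which diverges), and that no boundary-layer pathology prevents $V$ from being continuous up to $\{x=0\}$ and from solving the equation there in the appropriate sense. I would handle this by showing directly from \eqref{func:value} that $0 \le V(x,t) \le x$ — the lower bound is immediate since $\ell(v) \ge -\frac58$ is \emph{not} quite enough, so one instead bounds below by comparison with the explicit subsolution $0$, and the upper bound $V(x,t)\le x$ follows by plugging in the trajectory $\gamma(s) = x e^{-s}$... actually $\gamma(s) = \max\{x - \frac32 s, 0\}$ reaching $0$ in finite time with the exponential factor then vanishing, or simply by noting $x$ itself is a supersolution. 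Once these barriers pin down the growth and boundary behavior, the interior viscosity property plus the uniqueness/comparison result from \cite{TranVan2019} closes the argument. I would also remark that local uniform continuity of $V$ (hence that the limits in the DPP arguments are legitimate) follows from the Lipschitz bounds $0 \le \partial_x F \le 1$ recorded in \eqref{eq:bound on F_x} together with the equation itself controlling $\partial_t$.
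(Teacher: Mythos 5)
The paper gives no proof of this proposition at all: it declares the argument ``classical'' and refers to \cite{L, BCD, Tran19}, so your verification-theorem outline (show $V$ is a viscosity solution via the DPP, check the initial and boundary data, invoke the comparison/uniqueness result from \cite{TranVan2019}) is exactly the intended route, and the overall structure is sound. Your preliminary Legendre computation also does check out once the constants are fixed: $\tfrac12(p-1)(p-2)-1=\tfrac12p^2-\tfrac32p$ (there is no extra $-\tfrac12$), and its Legendre transform is $L(q)=\sup_p\{pq-\tfrac12p^2+\tfrac32p\}=\tfrac12\bigl(q+\tfrac32\bigr)^2$, which matches the running cost $\tfrac12\bigl(-\dot\gamma+\tfrac32\bigr)^2$ in \eqref{func:value} with $q=-\dot\gamma$, while the weight $e^{-\int_0^s d\lambda/\gamma(\lambda)}$ accounts for the zeroth-order coefficient $\tfrac1x$.

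The one step that would fail as written is the subsolution half. Testing the DPP with the single constant control $\gamma\equiv x_0$ (i.e.\ $\dot\gamma=0$) yields only
\[
\partial_t\varphi(x_0,t_0)+\frac{\varphi(x_0,t_0)}{x_0}-\frac12\Bigl(\frac32\Bigr)^2\le 0,
\]
which is the inequality $\partial_t\varphi+q\,\partial_x\varphi+\tfrac{\varphi}{x}-L(q)\le 0$ specialized to $q=0$; it does not control $\partial_x\varphi$ at all and is strictly weaker than the subsolution inequality $\partial_t\varphi+\sup_q\{q\,\partial_x\varphi-L(q)\}+\tfrac{\varphi}{x}\le 0$. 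You must run the same computation for the whole family of constant-velocity controls $\gamma(s)=x_0-qs$, $q\in\mathbb{R}$ (admissible on $[0,h]$ for $h$ small since $x_0>0$), and only then take the supremum over $q$ to reconstruct the Hamiltonian. With that repair, together with the nearly-optimal-control argument for the supersolution half and the barriers $0\le V\le x$ you describe, the argument closes as you indicate.
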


We skip the proofs of Propositions \ref{prop:DPP} and \ref{prop:value}, and we refer the readers to \cite{L, BCD, Tran19} for details.
By using Proposition \ref{prop:value}, we prove Theorem \ref{thm:large time2}. 

\smallskip

\begin{proof}{(\bf Proof of Theorem~\ref{thm:large time2} )}
Fix $x>0$.
Let $\gamma(s):=x+\frac{3}{2}s$ for $s \geq 0$. 
Then, $\tau_x=\infty$. 
By formula \eqref{func:value}, we see that
\[
0\le F(x,t)
\le 
e^{-\int_0^{t}\frac{d\lambda}{x+\frac{3\lambda}{2}}}F_0\left(x+\frac{3t}{2}\right).
\]
Noting that 
\[
\int_0^{t}\frac{d\lambda}{x+\frac{3\lambda}{2}}=\frac{2}{3}\log\left(\frac{3t}{2x}+1\right),  
\]
we have 
\[
e^{-\int_0^{t}\frac{d\lambda}{x+\frac{3\lambda}{2}}}
=\left(\frac{3t}{2x}+1\right)^{-2/3},  
\]
which implies 
\begin{equation}\label{est:1}
0\le F(x,t)
\le \left(\frac{3t}{2x}+1\right)^{-2/3}F_0\left(x+\frac{3t}{2}\right)
=\frac{F_0\left(x+\frac{3t}{2}\right)}{\left(x+\frac{3t}{2}\right)^{2/3}}\, x^{2/3}.  
\end{equation}
We now use \eqref{con:A2} to conclude right away that 
\[
F(x,t) \to 0 \qquad \text{ locally uniformly for $x\in[0,\infty)$.} 
\]   
\end{proof}

\smallskip

\begin{remark}
    It is worth noting that in the proof of Theorem \ref{thm:large time2}, we do not need to require fully condition \eqref{con:A1}. 
    More precisely, condition \eqref{con:A1} can be replaced by a much weaker one
    \begin{equation}\label{con:A1-p}
    F_0 \in \Lip([0,\infty)), \ 0 \leq F_0(x) \leq x.
    \end{equation}
\end{remark}

\subsection{Initial condition with supercritical growth} \label{sec:supercritical}
In this subsection, we study the solution when the initial data is supercritical.

\smallskip

\begin{proof}{(\bf Proof of Theorem~\ref{thm:large time3})}
Fix $y \in (0,\infty)$.
We note that, by backward characteristics (or by the optimal control formulation), an optimal path $X:[0,t] \to [0,\infty)$ with $X(t)=y$ satisfies the Hamiltonian system
 \begin{equation}\label{HS}
  \begin{cases}
  \dot X = \partial_p H = P(s) - \frac{3}{2}\,,\\
  \dot P = -\partial_x H - (\partial_z H)P = \frac{Z(s)}{X(s)^2}- \frac{P(s)}{X(s)}\,,\\
  \dot Z = P\cdot \partial_pH - H = \frac{P(s)^2}{2} - \frac{Z(s)}{X(s)}\,.
  \end{cases}
  \end{equation}
Here, $X(0)=x$ for some $x\geq 0$.
Moreover, $F$ is differentiable at $(X(s),s)$,
$P(s)=\partial_x F(X(s),s)$, and $Z(s)=F(X(s),s)$ for $0\leq s <t$.
There can be more than one optimal paths (backward characteristics), in which case $F$ might not be differentiable at $(y,t)=(X(t),t)$.
In any case, thanks to \eqref{eq:bound on F_x}, we have that $0\leq P(s) \leq 1$ for $0\leq s <t$.
Thus, 
\[
-\frac{3}{2} \leq \dot X(s) \leq -\frac{1}{2} \quad \text{ for all } s \in (0,t),
\]
which means that
\[
X(0)=x \geq y + \frac{t}{2}.
\]
As $t \to \infty$, $X(0) \to \infty$.
Therefore, the information of $F_0$ at $+\infty$ determine the behavior of $F(y,t)$ as $t \to \infty$.

\medskip

Thanks to \eqref{con:A3}, for any fixed $c>0$, 
there exists $r_c>0$ such that,
\[
F_0(x) \geq \frac{1}{c}\bar F(cx) \quad \text{ for all } x>r_c.
\]
Denote by $F_{0c}(x) = \min\{F_0(x),\frac{1}{c}\bar F(cx)\}$ for $x\geq 0$.
Let $F_c$ be the solution to \eqref{HJ-1} with initial condition $F_{0c}$.
Since $\frac{1}{c}\bar F(cx)$ is a stationary solution to \eqref{HJ-1}, and $F_{0c}(x)=\frac{1}{c}\bar F(cx)$ on $[r_c,\infty)$,
 we have that
\[
F(y,t) \geq F_c(y,t) = \frac{1}{c}\bar F(cy) \quad \text{ for all $t>2|r_c-y|$.}
\]
Thus, it is clear that
\begin{equation}\label{lower-bound-Fc}
\liminf_{t \to \infty} F(y,t) \geq \frac{1}{c}\bar F(cy) \quad \text{ locally uniformly for } y \in [0,\infty).
\end{equation}
The above \eqref{lower-bound-Fc} holds true for every $c>0$.
Note further that
\[
\lim_{c \to 0^+} \frac{1}{c}\bar F(cy)=\lim_{c \to 0^+} \frac{\bar F(cy)- \bar F(0)}{c}=\partial_x \bar F(0) y = y,
\]
which gives that 
\[
\liminf_{t \to \infty} F(y,t) \geq y \quad \text{ locally uniformly for } y \in [0,\infty).
\]
The conclusion follows.
\end{proof}

\subsection{Initial condition with critical growth} \label{sec:critical}
In this subsection, we study the solution with critical initial data.
We first argue that \eqref{con:A4} can be interpreted in a more intuitive way as following.
Let $c=\frac{27}{8\delta^3}$.
Then,
\begin{equation}\label{lim-c}
\lim_{x\to \infty}\frac{1}{c x^{2/3}} \bar F(cx)
=\lim_{x\to \infty}\frac{1}{c^{1/3}} \frac{\bar F(cx)}{(cx)^{2/3}} = \frac{3}{2c^{1/3}}= \frac{3}{2} \times \frac{2\delta}{3} = \delta.
\end{equation}
Thus,
\[
\lim_{x\to \infty}\frac{\bar F(cx)}{c F_0(x)} =1,
\]
which implies that \eqref{con:A4} is equivalent to the following condition
\begin{equation}\label{con:A5}
\frac{1}{c} \bar F(cx) -  h(x) \leq F_0(x) \leq \frac{1}{c} \bar F(cx) + h(x) \quad \text{ for all } x\geq 0,
\end{equation}
for $c=\frac{27}{8\delta^3}$.
Here, $h: [0,\infty) \to [0,\infty)$ is a function satisfying that
\[
\lim_{x \to \infty} \frac{h(x)}{x^{2/3}}=0.
\]

The idea of this proof is quite close to that of Theorem \ref{thm:large time3}, so we will not include all the details here.

\smallskip

\begin{proof}{(\bf Proof of Theorem~\ref{thm:large time4})}
We first note that \eqref{con:A5} holds.

\smallskip

Fix $y \in (0,\infty)$.
We note that, by backward characteristics (or by the optimal control formulation), an optimal path $X:[0,t] \to [0,\infty)$ with $X(t)=y$ satisfies the Hamiltonian system \eqref{HS}.
Here, $X(0)=x$ for some $x\geq 0$, $P(s)=\partial_x F(X(s),s)$, and $Z(s) = F(X(s),s)$ for $0\leq s <t$.
There can be more than one optimal paths (backward characteristics), in which case $F$ might not be differentiable at $(y,t)=(X(t),t)$.
By the same argument as in the proof of Theorem \ref{thm:large time3},
\[
X(0)=x \geq y + \frac{t}{2}.
\]
As $t \to \infty$, $X(0) \to \infty$.
Thus, the information of $F_0$ at $+\infty$ determine the behavior of $F(y,t)$ as $t \to \infty$.

Fix  $d>c$. Thanks to \eqref{lim-c} for $d=c$ and \eqref{con:A5}, there exists $x_d>0$ such that for $x\geq x_d$,
\[
F_0(x) \geq \frac{1}{d}\bar F(dx).
\]
Since $\frac{1}{d}\bar F(dx)$ is a stationary solution to \eqref{HJ} and only information at $+\infty$ of $F_0$ matters in the behavior of $F(y,t)$, it is clear that
\begin{equation*}
\liminf_{t \to \infty} F(y,t) \geq \frac{1}{d}\bar F(dy).
\end{equation*}
The above \eqref{lower-bound-Fc} holds true for every $d>c$, which gives further that
\begin{equation}\label{lower-bound-Fy}
\liminf_{t \to \infty} F(y,t) \geq \lim_{d \to c^+}\frac{1}{d}\bar F(dy)= \frac{1}{c}\bar F(cy).
\end{equation}
To get the upper bound, we perform the analysis in a similar way for $d \in (0,c)$ by noting that, for $x\gg 1$,
\[
F_0(x) \leq \frac{1}{d}\bar F(dx),
\]
and hence,
\begin{equation}\label{upper-bound-Fy}
\limsup_{t \to \infty} F(y,t) \leq \lim_{d \to c^-}\frac{1}{d}\bar F(dy)= \frac{1}{c}\bar F(cy).
\end{equation}
Combine \eqref{lower-bound-Fy} and \eqref{upper-bound-Fy} to conclude.
\end{proof}

\section{A non-convergence result} \label{sec:nonconvergence}

In this section, we give the proof of Theorem~\ref{thm:nonconvergence}. The meaning of this theorem is that if we do not have \eqref{con:A4} (or equivalently, \eqref{con:A5}), then large time behavior might not hold.
In other words, our claim is that the requirements in Theorem \ref{thm:large time4} are optimal if one wants to expect large time convergence.
We start with the following elementary fact.

\smallskip

\begin{lemma} \label{lem:comparison}
Let $c_1 , c_2 \in (0,\infty)$ be such that $c_1<c_2$. 
We have 
\begin{equation*}
    F_1(x) = \frac{1}{c_1} \bar F(c_1 x) > \frac{1}{c_2} \bar F(c_2 x) = F_2(x) \quad \text{ for all } x>0\,.
\end{equation*}
\end{lemma}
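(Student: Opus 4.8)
The plan is to reduce the claimed inequality $\frac{1}{c_1}\bar F(c_1 x) > \frac{1}{c_2}\bar F(c_2 x)$ to a monotonicity statement about a single auxiliary function of the scaling parameter. Fix $x>0$ and define $g(c) = \frac{1}{c}\bar F(cx)$ for $c>0$; it suffices to show $g$ is strictly decreasing in $c$. Since $\bar F \in C^1((0,\infty))$ with $\bar F(0)=0$, we compute
\[
g'(c) = -\frac{1}{c^2}\bar F(cx) + \frac{x}{c}\,\partial_x\bar F(cx) = \frac{1}{c^2}\Bigl(cx\,\partial_x\bar F(cx) - \bar F(cx)\Bigr).
\]
So the whole claim comes down to showing that the function $y \mapsto y\,\partial_x\bar F(y) - \bar F(y)$ is strictly negative for $y>0$, equivalently that $h(y) := \bar F(y) - y\,\partial_x\bar F(y) > 0$ for $y>0$.

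First I would observe $h(0)=0$ (using $\bar F(0)=0$ and boundedness of $\partial_x\bar F$ near $0$, indeed $\partial_x\bar F(0)=1$). Then $h'(y) = \partial_x\bar F(y) - \partial_x\bar F(y) - y\,\partial_x^2\bar F(y) = -y\,\partial_x^2\bar F(y)$. From the remark following Proposition~\ref{prop:characterization1}, $x \mapsto \partial_x\bar F(x)$ is strictly decreasing on $(0,\infty)$ — it equals $\bigl(\alpha(x)^2 + \tfrac{1}{9\alpha(x)^2} + \tfrac13\bigr)^{-1}$ with $\alpha$ increasing and $z\mapsto (z^2+\tfrac{1}{9z^2}+\tfrac13)^{-1}$ strictly decreasing for $z>\tfrac{1}{\sqrt3}$ — so $\partial_x^2\bar F(y) < 0$ for $y>0$, whence $h'(y) = -y\,\partial_x^2\bar F(y) > 0$ for all $y>0$. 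Combining $h(0)=0$ with $h'>0$ on $(0,\infty)$ gives $h(y)>0$ for $y>0$, hence $g'(c)<0$ for all $c>0$, hence $g$ is strictly decreasing, which yields $g(c_1)>g(c_2)$, i.e. the desired inequality.

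Alternatively — and perhaps cleaner — one can argue directly from concavity: $\bar F$ is concave (its derivative is decreasing) and $\bar F(0)=0$, so for any $y>0$ the tangent line at $y$ lies above the graph, in particular above the point $(0,\bar F(0))=(0,0)$, giving $0 = \bar F(0) \le \bar F(y) + \partial_x\bar F(y)(0-y) = \bar F(y) - y\,\partial_x\bar F(y)$; strictness follows since $\partial_x\bar F$ is \emph{strictly} decreasing so $\bar F$ is strictly concave and the tangent inequality is strict. Either route, the only real content is the strict monotonicity of $\partial_x\bar F$, which is already recorded in the excerpt, so I do not anticipate a genuine obstacle here; the main thing to be careful about is the behavior at the endpoint $y=0$, where one should check $y\,\partial_x^2\bar F(y)\to 0$ (or simply invoke continuity of $h$ on $[0,\infty)$ together with $h(0)=0$) to legitimately integrate $h'$ from $0$.
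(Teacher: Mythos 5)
Your argument is correct, but it takes a different route from the paper's. The paper differentiates in $x$: by the chain rule $F_1'(x)=\bar F'(c_1x)$ and $F_2'(x)=\bar F'(c_2x)$, and since $\partial_x\bar F$ is strictly decreasing and $c_1x<c_2x$, one gets $F_1'(x)>F_2'(x)>0$ for every $x>0$; integrating from $F_1(0)=F_2(0)=0$ finishes the proof in two lines. You instead fix $x$ and differentiate in the scaling parameter $c$, which forces you to establish the auxiliary inequality $\bar F(y)>y\,\partial_x\bar F(y)$ — the strict tangent-line (concavity) estimate — before concluding. Both arguments ultimately rest on the same fact, namely the strict monotonicity of $\partial_x\bar F$ recorded in the remark after Proposition~\ref{prop:characterization1}, and your handling of the endpoint $y=0$ is careful and sound. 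What your version buys is slightly more: it shows that $c\mapsto\frac{1}{c}\bar F(cx)$ is strictly decreasing as a function of the continuous parameter $c$, which is the monotone-family statement implicitly invoked later when limits such as $d\to c^{\pm}$ are taken in the proof of Theorem~\ref{thm:large time4}. The paper's version is the more economical proof of the stated two-point comparison.
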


\begin{proof}
    We have that for every $x>0$,
    \begin{equation*}
        F_1'(x) = \bar F'(c_1 x) > \bar F'(c_2 x) = F_2'(x) >0 \,.
    \end{equation*}
    As $F_1(0) = F_2(0) = 0$, we conclude that
    \begin{equation*}
        F_1(x) > F_2(x) \quad \text{ for all } x>0\,,
    \end{equation*}
    as desired.
\end{proof}

\smallskip

We are now ready to explicitly construct an initial data $F_0$ to prove Theorem~\ref{thm:nonconvergence}.

\smallskip

\begin{proposition} \label{prop:nonconvergence}
    Fix $x_0 >0$ and let $F_1$ and $F_2$ be as in Lemma~\ref{lem:comparison}.
    There exist unbounded increasing sequences of positive real numbers $\set{a_i}_{i=0}^\infty$, and $\set{t_i}_{i=1}^\infty$, where $a_0 =0$, such that for 
    \begin{equation*}
        F_0(x) = \begin{dcases}
            F_1(x) &\text{ if } x \in [a_{4i}, a_{4i+1}], \\
            F_1(a_{4i+1})  &\text{ if } x \in[a_{4i+1}, a_{4i+2}] \,,\\
            F_2(x) &\text{ if } x \in[a_{4i+2}, a_{4i+3}] \,, \\ 
            F_2(a_{4i+3})+ F_2'(a_{4i+3}) (x- a_{4i+3})    &\text{ if } x \in[a_{4i+1}, a_{4i+2}] \,,\\
        \end{dcases}
    \end{equation*}
    we have $F_0 \in \Lip([0,\infty))$ satisfies \eqref{con:A1} in the a.e. sense, and 
    \begin{equation*}
        \lim_{i\to \infty } F(x_0,t_{2i+1}) = F_1(x_0) > F_2(x_0) = \lim_{i\to\infty} F(x_0, t_{2i}) \,.
    \end{equation*}
\end{proposition}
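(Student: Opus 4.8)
The plan is to build the initial datum $F_0$ by an inductive stopping-time construction, alternating between long flat stretches where $F_0$ coincides with the stationary solution $F_1$ (on scales where we force $F(x_0,t)$ to be close to $F_1(x_0)$) and long affine-plus-stationary stretches where $F_0$ coincides with $F_2$. The key structural fact I would use is the finite speed of propagation for the backward characteristics established in the proofs of Theorems~\ref{thm:large time3} and \ref{thm:large time4}: an optimal path $X$ with $X(t)=y$ satisfies $y+\tfrac{t}{2}\le X(0)\le y+\tfrac{3t}{2}$, so that $F(y,t)$ depends only on the values of $F_0$ on the window $[y+\tfrac{t}{2},\,y+\tfrac{3t}{2}]$ (and, via the singular term, implicitly on what happens closer to the origin, but the monotonicity/comparison structure lets us localize). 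On any maximal interval where $F_0\equiv F_i$ with $F_i$ a stationary solution, the comparison principle forces $F(\cdot,t)\equiv F_i$ there for all $t$ up to the time the influence of the endpoints reaches $x_0$; this is exactly the mechanism already used in the proof of Theorem~\ref{thm:large time3}.

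Concretely, I would proceed as follows. First, having fixed $x_0>0$ and $F_1>F_2$ as in Lemma~\ref{lem:comparison}, set $a_0=0$ and suppose $a_0<\dots<a_{4i}$ and $t_1<\dots<t_{2i}$ have been chosen. Define $F_0=F_1$ on $[a_{4i},a_{4i+1}]$; by sublinearity of $\bar F$ and the localization window above, if $a_{4i+1}$ is taken large enough there is a time $t_{2i+1}>t_{2i}$ (take $t_{2i+1}$ roughly of order $a_{4i}$, large) such that the window $[x_0+\tfrac{t_{2i+1}}{2},x_0+\tfrac{3t_{2i+1}}{2}]$ lies inside $[a_{4i},a_{4i+1}]$ and, moreover, the modified datum $\min\{F_0,F_1\}=F_1$ near that window forces, by comparison with the stationary solution $F_1$, the bound $F(x_0,t_{2i+1})\ge F_1(x_0)-\varepsilon_i$; since always $F(x_0,t)\le x_0$ and one also gets a matching upper bound by comparing with $F_1$ plus a decaying correction (as in the proof of Theorem~\ref{thm:large time4}), in fact $F(x_0,t_{2i+1})\to F_1(x_0)$. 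Then choose $a_{4i+2}$ to flatten $F_1$ at the level $F_1(a_{4i+1})$ over a long stretch (this preserves $0\le F_0'\le 1$ and keeps $F_0\le x$), followed by a switch to $F_0=F_2$ on $[a_{4i+2},a_{4i+3}]$; taking $a_{4i+2},a_{4i+3}$ large enough produces, by the same localization-plus-comparison argument applied to $F_2$, a time $t_{2i+2}$ with $F(x_0,t_{2i+2})\to F_2(x_0)$. Finally extend $F_0$ affinely with slope $F_2'(a_{4i+3})$ on $[a_{4i+3},a_{4i+4}]$ to rejoin a stretch where $F_0$ can again be taken equal to $F_1$ above, and to guarantee that all the $a_j$ and $t_j$ can be chosen increasing and unbounded.

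Two points need care. One is verifying \eqref{con:A1} in the a.e.\ sense throughout the construction: $F_1'=\bar F'(c_1\cdot)$ and $F_2'=\bar F'(c_2\cdot)$ both lie in $(0,1)$, the flat pieces have slope $0$, and the affine bridging piece has slope $F_2'(a_{4i+3})\in(0,1)$, so $0\le F_0'\le1$ a.e.; sublinearity of $F_0$ and $0\le F_0\le x$ follow because $F_0$ is a finite-slope interpolation between sublinear functions lying below the diagonal, provided one checks that the flat level $F_1(a_{4i+1})$ and the value $F_2(a_{4i+2})$ are consistent, i.e.\ that $F_2(a_{4i+2})\le F_1(a_{4i+1})$ and that the affine bridge from $F_2$ back up to $F_1$ does not cross the diagonal — both arrangeable by choosing the gaps $a_{4i+2}-a_{4i+1}$ and $a_{4i+4}-a_{4i+3}$ large. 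The other, and the main obstacle, is the rigorous lower/upper comparison at $x_0$ at the times $t_{2i+1},t_{2i+2}$: one must show that replacing $F_0$ by the truncations $\min\{F_0,F_1\}$ (respectively $\max\{F_0,F_2\}$, appropriately) and using that $F_1,F_2$ are \emph{exact} stationary solutions pins $F(x_0,t)$ to $F_1(x_0)$ (resp.\ $F_2(x_0)$) up to a controlled error, despite the nonlocal/singular term $F/x$. Here I would invoke precisely the argument in the proof of Theorem~\ref{thm:large time3}: since the relevant truncated datum agrees with a stationary solution on a half-line $[r,\infty)$ and the optimal trajectory reaching $(x_0,t)$ starts at $X(0)\ge x_0+\tfrac t2$, for $t$ large the whole trajectory stays in the region where the datum is stationary, so $F(x_0,t)$ equals the stationary value exactly; the two-sided control from \eqref{con:A5}-type estimates then upgrades "$\liminf\ge$" to a genuine limit along the subsequences. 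Taking $\varepsilon_i\to0$ and the $a_j,t_j$ diverging, we get $\lim_i F(x_0,t_{2i+1})=F_1(x_0)>F_2(x_0)=\lim_i F(x_0,t_{2i})$, so $\lim_{t\to\infty}F(x_0,t)$ fails to exist, which is the assertion of Proposition~\ref{prop:nonconvergence} and hence of Theorem~\ref{thm:nonconvergence}.
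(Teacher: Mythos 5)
Your proposal is correct and takes essentially the same route as the paper: the same alternating construction of $F_0$ from pieces of $F_1$ and $F_2$ joined by flat and affine bridges, and the same finite-speed-of-propagation argument showing that $F(x_0,t)$ is pinned \emph{exactly} to the stationary value once the domain of dependence $[x_0+\tfrac{t}{2},\,x_0+\tfrac{3t}{2}]$ sits inside an interval where $F_0$ coincides with $F_1$ or $F_2$ (so the $\varepsilon_i$-errors and truncation comparisons in your write-up are not actually needed). The only point to tighten is the junction condition: for $F_0$ to be continuous and nondecreasing you must pick $a_{4i+2}$ with $F_2(a_{4i+2})=F_1(a_{4i+1})$ exactly (possible since $F_2$ is strictly increasing and unbounded), not merely $F_2(a_{4i+2})\le F_1(a_{4i+1})$.
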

Let $F_0$ be as above.
It is worth noting that, by the computation at the beginning of  Section~\ref{sec:critical},
\[
\frac{3}{2 c_2^{1/3}}=\liminf_{x\to \infty} \frac{F_0(x)}{x^{2/3}} <\limsup_{x\to \infty} \frac{F_0(x)}{x^{2/3}} =\frac{3}{2 c_1^{1/3}}.
\]

\begin{proof}
    The key observation here is that the characteristics $X$ (defined in the proof of Theorem~\ref{thm:large time4}) has bounded slopes, i.e.,
    \begin{equation} \label{ine:slopeBounds}
       -\frac{3}{2} \leq \dot X \leq -\frac{1}{2} \,. 
    \end{equation}
    
    \begin{figure}[hbt!]
        \centering
        \includegraphics[width=11cm]{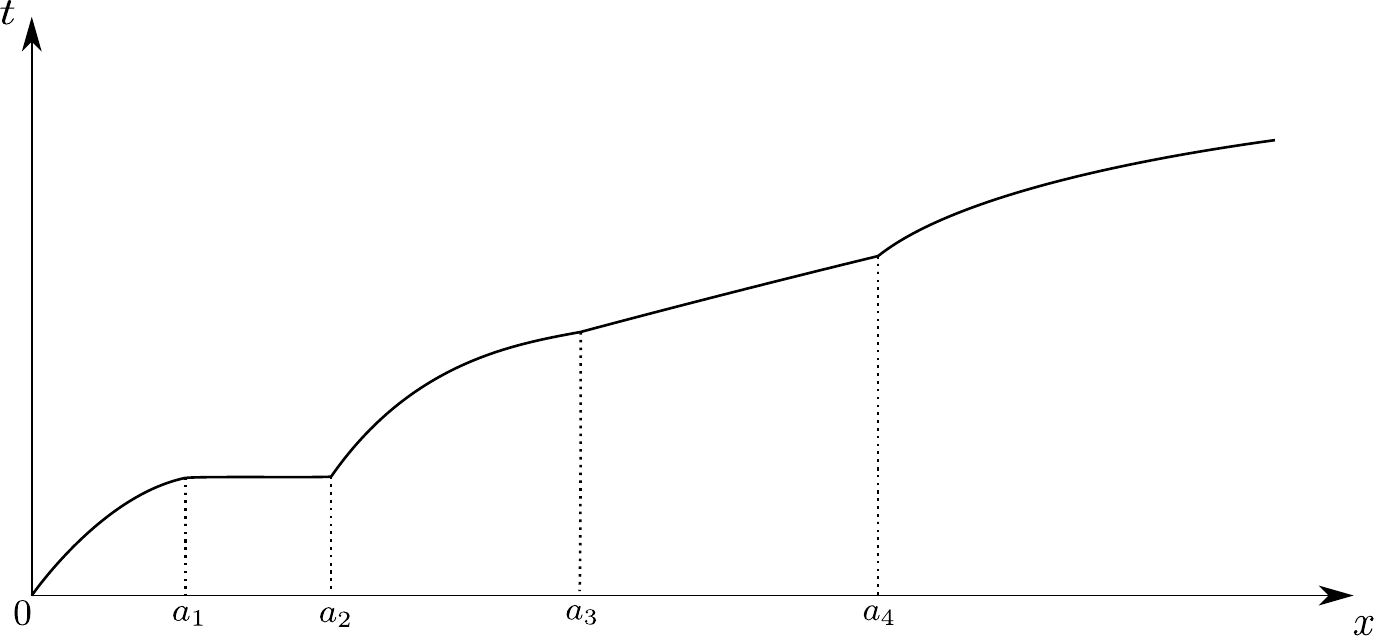}
        \caption{Initial data $F_0$}
    \end{figure}
    
    \begin{figure}[hbt!]
        \centering
        \includegraphics[width=11cm]{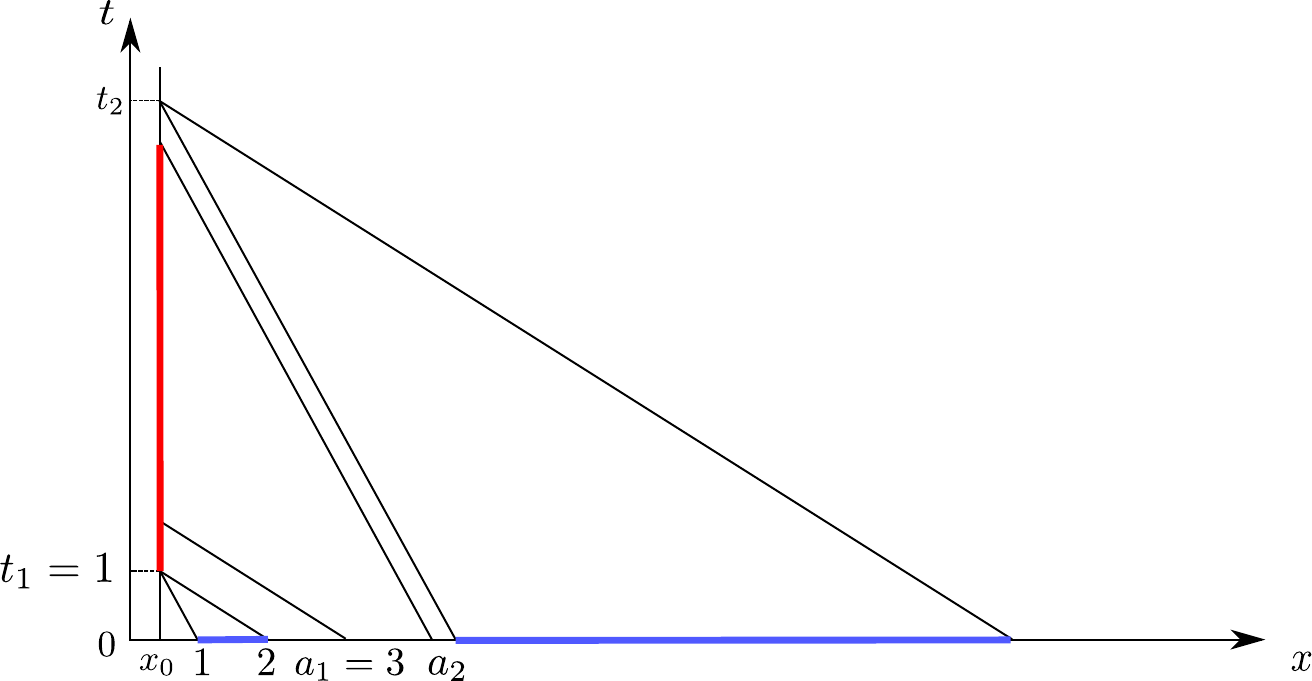}
        \caption{Domains of dependence (blue) and range of influence (red)}
    \end{figure}
   \noindent For simplicity, we first fix $x_0 = \frac{1}{2}$ although the argument works for any $x_0>0$. 
   The construction of $a_i$'s and $t_i$'s is as follows. 
   
   \smallskip
   
   \emph{Step 1.}  By~\eqref{ine:slopeBounds}, we have that the domain of dependence of $(x_0,t_0) \in (0,\infty)^2$ is  
    \begin{equation} \label{domainOfDependence}
        \Delta(x_0,t_0)=\left \{ x\,:\, x_0 + \frac{1}{2} t_0 \leq x \leq  x_0+ \frac{3}{2} t_0   \right\} \, .
    \end{equation}
    That is, $F(x_0,t_0)$ is determined by information of $F_0$ on $\Delta(x_0,t_0)$.
     On the other hand, given $x > x_0$, the range of influence when $X(x,t) = x_0$ is
    \begin{equation} \label{rangeOfInfluence}
       I(x)=\left\{t\,:\, \frac{2}{3}(x-x_0) \leq t \leq 2(x - x_0) \right\}  \,.
    \end{equation}
    This means that $F_0(x)$ might be able to influence $F(x_0,t)$ for $t \in I(x)$.
    
    \smallskip
    
    Recall $x_0=\frac{1}{2}$, and let $t_1  = 1$,  $a_1 = 3$. 
    By noting that $\Delta(x_0,t_1) = [1,2] \subset [0,3]$ by~\eqref{domainOfDependence}, we get that
    \begin{equation*} 
        F(x_0, t_1) = F_1(x_0) \,.
    \end{equation*}
    
    \emph{Step 2.} 
    Since $F_1(a_1)>F_2(a_1)$ and $F_2$ is strictly increasing, there exists a unique $a_2>a_1$ such that $F_2(a_2) = F_1(a_1)$.
    By~\eqref{rangeOfInfluence}, the range of influence of $[a_1,a_2]$ is
    \begin{equation*}
        \frac{2}{3}( a_1 - \frac{1}{2}) \leq t \leq 2( a_2 - \frac{1}{2}) \,.
    \end{equation*}
    Then, the domain of dependence of $\set{x_0} \times [2(a_2 - 1/2), 2(a_2 - 1/2) + 1]$ is
    \begin{equation*}
        x_0 + (a_2 - \frac{1}{2}) \leq x \leq x_0 + 3(a_2 - \frac{1}{2}) + \frac{3}{2}  \,.
    \end{equation*}
    
    Then, let $t_2 = 2(a_2 - 1/2) + 1/2$ and $a_3 = 3(a_2 - 1/2) + 2$.
    By construction, we have that the domain of dependence of $(x_0, t_2)$ is contained in $(a_2, a_3)$ and therefore,
    \begin{equation*}
        F(x_0, t_2) = F_2(x_0) \,.
    \end{equation*}
    
    Let $a_4 > a_3$ so that $F_2(a_3)+F_2'(a_3) (a_4 - a_3)   = F_1(a_4)$ ($a_4$ exists because $F_2$ is sublinear). Then, we pick $t_3$ and $a_5$ the same way with picking $t_2$ and $a_3$, i.e.,
    \begin{equation*}
        t_3  = 2(a_4 - \frac{1}{2}) + \frac{1}{2} \quad \text{ and } \quad a_5 = 3(a_4 - \frac{1}{2}) + 2 \,.
    \end{equation*}
    Reasoning as above, we conclude that
    \begin{equation*}
        F(x_0,t_3) = F_1(x_0) \,.
    \end{equation*}

    \emph{Step 3.} Repeat Step 2 indefinitely. By construction, $F_0 \in \Lip([0,\infty))$ satisfies \eqref{con:A1} in the a.e. sense, and
    \begin{equation*}
        F(x_0,t_{2i+1}) = F_1(x_0) 
        \quad \text{ and } \quad
        F(x_0,t_{2i}) = F_2(x_0) 
    \end{equation*}
    for every $i \in \N$. This implies what we want to prove.
\end{proof}

\smallskip

\begin{remark}
    We deliberately avoided the regions where shocks might occur in the above construction. However, viscosity solutions make sense for all time and still admit characteristics where there is no shocks.
\end{remark}

\smallskip

\begin{proof}{(\bf Proof of Theorem~\ref{thm:nonconvergence})} The nonconvergence result follows immediately from Proposition~\ref{prop:nonconvergence}. \end{proof}

\section*{Acknowledgement}
HM is supported by the JSPS through grants KAKENHI \#19K03580, \#19H00639, \#17KK0093, \#20H01816. 
HT is supported in part by NSF grant DMS-1664424 and NSF CAREER grant DMS-1843320.

\appendix

\section{Derivation of Hamilton-Jacobi equation \eqref{HJ}} \label{appendix}
We give the derivation of equation \eqref{HJ} here for completeness of the paper, which is taken from \cite{TranVan2019}.
In order to derive equation~\eqref{HJ}, we utilize the weak form of the C-F equation~\eqref{weak sln} with the test function $\phi^x(s) = 1 - e^{-sx}$.
By noting the important identity that \[
\phi^x(s+\hat s) - \phi^x(s) - \phi^x(\hat s)=-\phi^x(s) \phi^x(\hat s),
\]
we have
\begin{align*}
  \partial_t F(x,t) &= \frac{1}{2} \int_0^\infty \int_0^\infty ( 1 - e^{-(s + \hat s) x} - 1 + e^{-sx} -1 + e^{-\hat s x} ) s \rho(s,t) \hat \rho(\hat s, t) \, d\hat s ds \\
  & - \frac{1}{2} \int_0^\infty \int_0^s ( 1 - e^{-sx} - 1 + e^{-(s - \hat s)x} - 1 + e^{- \hat s x} ) \, d \hat s \, \rho(s,t) \, ds \\ 
  &=  -\frac{1}{2} \int_0^\infty \int_0^\infty (1 - e^{-sx})(1 - e^{-\hat s x}) s \rho(s,t) \hat s \rho(\hat s,t) \, d\hat s ds \\
  & - \frac{1}{2} \int_0^\infty ( - s - s e^{-sx} + \frac{2}{x}(1  - e^{-sx}) ) \rho(s,t) \, ds  \\
  &= - \frac{1}{2}( m_1(t) - \partial_x F(x,t))^2 + \frac{m_1(t)}{2} + \frac{ \partial_x F(x,t)}{2} - \frac{F}{x} \\
  &= - \frac{1}{2} ( m_1(t) - \partial_x F(x,t)) ( m_1(t) - \partial_x F(x,t) + 1) - \frac{F}{x} + m_1(t) \,.
\end{align*}
Equation~\eqref{HJ} follows if we assume conservation of mass, i.e.,
\begin{equation*}
    m_1(t) = m \quad \text{ for all } t \geq 0\,.
\end{equation*}
\bibliography{bibliography.bib}
\bibliographystyle{plain}

\end{document}